\documentclass[11pt,reqno]{amsart}

\usepackage[T1]{fontenc}
\usepackage{lmodern}
\usepackage{microtype}
\usepackage[margin=1.12in]{geometry}
\usepackage{amsmath,amssymb,amsthm,mathtools}
\usepackage{enumitem}
\usepackage{booktabs,array}
\usepackage{xcolor}
\usepackage[colorlinks=true,linkcolor=blue,citecolor=blue,urlcolor=blue]{hyperref}

\numberwithin{equation}{section}

\theoremstyle{plain}
\newtheorem{theorem}{Theorem}[section]
\newtheorem{proposition}[theorem]{Proposition}
\newtheorem{lemma}[theorem]{Lemma}
\newtheorem{corollary}[theorem]{Corollary}

\newcommand{\N}{\mathbb N}
\newcommand{\Pp}{\mathbb P}
\newcommand{\E}{\mathbb E}
\newcommand{\R}{\mathbb R}
\newcommand{\QQ}{\mathbb Q}
\newcommand{\TV}{\mathrm{TV}}
\newcommand{\PD}{\operatorname{PD}}
\newcommand{\Gam}{\operatorname{Gamma}}
\newcommand{\Exp}{\operatorname{Exp}}
\newcommand{\Unif}{\operatorname{Unif}}
\newcommand{\Law}{\mathcal L}
\newcommand{\one}{\mathbf 1}
\newcommand{\W}{\mathbf W}
\newcommand{\bP}{\mathbf P}
\DeclareMathOperator{\Li}{Li}

\title[Two phase transitions in Billingsley's model]
{Tilting Billingsley's model toward a giant prime:\\ two phase transitions}

\author{Wen Sun}
\address{School of Mathematical Sciences, University of Science and Technology of China,
96 Jinzhai Road, Hefei 230026, China}
\email{wensun.ustc@gmail.com}

\subjclass[2020]{Primary 60B10, 60G55; secondary 11N37, 11N60, 82B26}
\keywords{Euler product, Gibbs measure, largest prime factor, phase transition,
Poisson--Dirichlet distribution, random integer, weighted random integer}

\begin{document}

\begin{abstract}
Billingsley's theorem~\cite{Billingsley1972} states that the normalized
logarithms of the prime factors of a uniform random integer in $[1,x]$
converge to the
Poisson--Dirichlet law $\PD(1)$, while weighting an integer $n$ by the
generalized divisor function $d_\theta(n)$ gives $\PD(\theta)$
\cite{ElboimGorodetsky2024}.  We ask what
remains of this picture when the integer is also rewarded according to its
largest prime factor $P^+(n)$.  For fixed $\theta,\beta>0$ and
$\gamma\ge0$, we sample $N_x=n\le x$ with probability proportional to
$d_\theta(n)\exp\{\beta H_\gamma(n)\}$, where
$H_\gamma(n)=(\log P^+(n))^\gamma$ for $\gamma>0$, while
$H_0(1)=0$ and $H_0(n)=1$ for $n\ge2$.  Two phase transitions occur.  At $\gamma=0$
the $\PD(\theta)$ partition survives, whereas every fixed $\gamma>0$
forces one prime to carry asymptotically all logarithmic mass.  The second
transition, at $\gamma=1$, concerns the cofactor
$R_x=N_x/P^+(N_x)$.  With $a_x=\beta\gamma(\log x)^{\gamma-1}$, its law is
asymptotic in total variation to
$\QQ_{a_x}(m)=d_\theta(m)m^{-1-a_x}/\zeta(1+a_x)^\theta$.  Thus, for
$0<\gamma<1$, $a_x\log R_x$ has a $\Gam(\theta,1)$ limit, with shape
$\theta$ and rate one, and the normalized logarithmic prime factors of
$R_x$ have an independent $\PD(\theta)$ limit; at
$\gamma=1$, $R_x$ has a nondegenerate discrete limit; and for $\gamma>1$,
$R_x=1$ with high probability.  We also determine the joint limits
involving $N_x/x$ and the normalizing constants, which include the classical
Alladi--Erd\H{o}s asymptotic~\cite{AlladiErdos1977} as a special case.
\end{abstract}

\maketitle

\section{Introduction}\label{sec:intro}

\subsection{Billingsley's theorem}\label{sec:billingsley}

Let $J_x$ be uniformly distributed on the integers in $[1,x]$, and list
its prime factors with multiplicity as
\[
 P_1(J_x)\ge P_2(J_x)\ge\cdots,
\]
the list being completed by ones; on $\{J_x=1\}$, an event of probability
$O(x^{-1})$, all entries equal one.  In a standard equivalent formulation,
Billingsley \cite{Billingsley1972} proved that
\begin{equation}\label{eq:billingsley}
 \left(\frac{\log P_i(J_x)}{\log x}\right)_{i\ge1}
 \Longrightarrow\PD(1),
\end{equation}
where $\PD(\theta)$ denotes the Poisson--Dirichlet distribution with
parameter $\theta>0$, a law on decreasing nonnegative sequences with sum
one.  The family was introduced by Kingman \cite{Kingman1975}, and its
gamma subordinator construction is recalled in
Section~\ref{sec:euler}.  Since
$\log J_x/\log x\to1$ in probability, the same limit holds with
$\log J_x$ in the denominator.  Donnelly and Grimmett
\cite{DonnellyGrimmett1993} gave a direct probabilistic proof of this
version: when listed by sampling in size biased order, the normalized
logarithmic prime factors converge to the Griffiths--Engen--McCloskey
$\operatorname{GEM}(1)$ law, whose decreasing rearrangement has law
$\PD(1)$.  On the permutation side, Watterson's classical Ewens limit
states that, for every $\theta>0$, the ranked cycle lengths of an
Ewens$(\theta)$ random permutation, divided by the permutation size,
converge to $\PD(\theta)$ \cite{Watterson1976}; see also
\cite{ABT2003}.  In this analogy, Billingsley's theorem
\eqref{eq:billingsley} is the arithmetic counterpart of the special case
$\theta=1$.

On the arithmetic side, an analogous parameter is carried by the
generalized divisor function $d_\theta$, the multiplicative function
determined by
\begin{equation}\label{eq:dtheta-euler}
 \sum_{n\ge1}\frac{d_\theta(n)}{n^{s}}
 =\prod_p(1-p^{-s})^{-\theta}
 =\zeta(s)^\theta,
 \qquad \operatorname{Re}s>1,
\end{equation}
where $\zeta$ is the Riemann zeta function.  Throughout the paper, every
sum or product indexed by $p$ is over primes.  Elboim and Gorodetsky
\cite[Theorem~1.1]{ElboimGorodetsky2024} obtained the corresponding
arithmetic extension for every $\theta>0$ as part of their theorem for
multiplicative weights: if $n\le x$ is sampled with probability
proportional to $d_\theta(n)$, then its normalized logarithmic prime
factors converge to $\PD(\theta)$.  This cutoff law, weighted by
$d_\theta$, is the arithmetic analogue of the Ewens measure and will
serve as our reference law.

\subsection{The model}\label{sec:model}

We perturb this cutoff law by a reward that sees only
the largest prime factor.  Throughout the paper
\[
 \theta>0,\qquad \beta>0,\qquad \gamma\ge0
\]
are fixed, and all limits are taken as $x\to\infty$.

We first describe the values of $d_\theta$.  If $n=\prod_pp^{\nu_p(n)}$ is
the prime factorization of $n$, where $\nu_p(n)$ is the exponent of $p$ in
$n$, then
\begin{equation}\label{eq:dtheta-local}
 d_\theta(n)
 =\prod_{p:\,\nu_p(n)>0}\frac{(\theta)_{\nu_p(n)}}{\nu_p(n)!},
 \qquad d_\theta(1)=1,
\end{equation}
where $(\theta)_0=1$ and
$(\theta)_k=\theta(\theta+1)\cdots(\theta+k-1)=\Gamma(\theta+k)/\Gamma(\theta)$
for $k\ge1$, $\Gamma$ being Euler's gamma function; expanding one Euler
factor at a time in \eqref{eq:dtheta-euler} gives \eqref{eq:dtheta-local}.
Thus $d_\theta$ is multiplicative, the value $d_\theta(p^k)=(\theta)_k/k!$
does not depend on the prime $p$, $d_\theta(p)=\theta$, and
$d_1\equiv1$.  Hence, for $\theta=1$, the unperturbed reference law is
uniform on the integers up to $x$.

For $n\ge2$ let $P^+(n)$ be the largest prime factor of $n$, and put
$P^+(1)=1$.  The rewarded statistic is
\[
 H_\gamma(n)=
 \begin{cases}
  (\log P^+(n))^\gamma,&\gamma>0,\\
  \one_{\{n\ge2\}},&\gamma=0,
 \end{cases}
\]
where $\one_B$ denotes the indicator of an event $B$.  The second line
removes the ambiguity $0^0$: for every $n\ge2$ it is the pointwise limit
of the first line as $\gamma\downarrow0$, and $H_\gamma(1)=0$ for every
$\gamma\ge0$.  For $x\ge3$, the model is the probability distribution
\begin{equation}\label{eq:model}
 \Pp_x(N_x=n)
 =\frac{1}{Z_x}\,d_\theta(n)\exp\{\beta H_\gamma(n)\},
 \qquad 1\le n\le x,
\end{equation}
with normalizing constant
\begin{equation}\label{eq:partition}
 Z_x=\sum_{n\le x}d_\theta(n)\exp\{\beta H_\gamma(n)\}.
\end{equation}
In the language of statistical mechanics, \eqref{eq:model} is a Gibbs
measure whose reference law is the cutoff law weighted by $d_\theta$ and whose
energy is $-\beta H_\gamma$.  For $\gamma>0$, set
$V_\gamma(s)=\beta s^\gamma$ for $s\ge0$.  Then
$\beta H_\gamma(n)=V_\gamma(\log P^+(n))$, and we call $V_\gamma$ the
external field.  The parameter $\theta$ controls the reference weight,
$\beta$ is the strength of the field, $\gamma$ determines how fast the
field grows with the largest prime factor, and the cutoff $x$ plays the
role of the volume.  We suppress $\theta,\beta,\gamma$ from the notation and
write $\E_x$ for expectation under $\Pp_x$.  When $\theta=1$ and
$\gamma=1$, the normalizing constant is the power sum
$Z_x=\sum_{n\le x}P^+(n)^\beta$.

\subsection{Overview of the results}\label{sec:overview}

We ask how a reward depending only on the largest prime factor changes the
Poisson--Dirichlet partition, and what law governs the remaining integer once
a giant prime has formed.  A small positive $\gamma$ might be expected to
give only a mild deformation.  Instead it changes the macroscopic
factorization completely, while the entire cofactor admits a simple Euler
product approximation in total variation.  Table~\ref{tab:phases}
summarizes the answer; precise statements appear in
Section~\ref{sec:results}.  For
$n\ge2$, let $P_1(n)\ge P_2(n)\ge\cdots$ be its prime factors listed with
multiplicity and completed by ones, and set
$\W(n)=(\log P_i(n)/\log n)_{i\ge1}$, with
$\W(1)=(1,0,0,\ldots)$.  This separate convention is needed because
$\log1=0$, and it keeps $\W(n)$ in the unit simplex.  We also write
$R_x=N_x/P^+(N_x)$, $L=\log x$,
$a_x=\beta\gamma L^{\gamma-1}$, and $b_x=1+a_x$.  We use
$G\sim\Gam(\theta,1)$ with shape $\theta$ and rate one,
$U\sim\Unif(0,1)$, $E\sim\Exp(1)$ with rate one, and
$\Pp(B_\beta\le u)=u^{1+\beta}$ for $0\le u\le1$.  The Euler product
law $\QQ_a$ is defined in \eqref{eq:Qa-intro} below, and
$M_\beta$ denotes a random integer with law $\QQ_\beta$.  We call $N_x/x$
the cutoff location of the sampled integer.  When $\gamma>1$ this ratio
converges to one, and its distance from the upper cutoff is resolved on
the finer scale $b_x\log(x/N_x)$.

\begin{table}[t]
\centering
\small
\renewcommand{\arraystretch}{1.45}
\setlength{\tabcolsep}{4.5pt}
\begin{tabular}{@{}lcccc@{}}
\toprule
 & $\gamma=0$ & $0<\gamma<1$ & $\gamma=1$ & $\gamma>1$\\
\midrule
$\W(N_x)$
 & $\PD(\theta)$
 & $(1,0,\ldots)$
 & $(1,0,\ldots)$
 & $(1,0,\ldots)$\\
cofactor $R_x$
 & $R_x\xrightarrow{\Pp}\infty$
 & $a_x\log R_x\Rightarrow G$
 & $R_x\Rightarrow M_\beta$
 & $R_x=1$ w.h.p.\\
$\W(R_x)$
 & ---
 & $\PD(\theta)$
 & ---
 & ---\\
cutoff location
 & $N_x/x\Rightarrow U$
 & $N_x/x\Rightarrow U$
 & $N_x/x\Rightarrow B_\beta$
 & $b_x\log(x/N_x)\Rightarrow E$\\
$\Pp_x(N_x\text{ prime})$
 & $\sim\Gamma(\theta+1)L^{-\theta}$
 & $\sim a_x^{\theta}$
 & $\to\zeta(1+\beta)^{-\theta}$
 & $\to1$\\
$Z_x$ up to constants
 & $xL^{\theta-1}$
 & $xe^{\beta L^\gamma}L^{\theta(1-\gamma)-1}$
 & $x^{1+\beta}/L$
 & $xe^{\beta L^\gamma}L^{-\gamma}$\\
\bottomrule
\end{tabular}
\medskip
\caption{Phase diagram for \eqref{eq:model}.  The $\W(N_x)$ row gives
weak convergence at $\gamma=0$ and convergence in probability otherwise;
``w.h.p.'' means with probability tending to one, and ``---'' means that
the table does not display a limit for that entry.}
\label{tab:phases}
\end{table}

\emph{The first transition, at $\gamma=0$.}  This transition concerns the
macroscopic normalized prime factor partition.  At $\gamma=0$ the reward is
constant away from the integer one, so the $\PD(\theta)$ limit survives and
is asymptotically independent of the uniform limit of $N_x/x$
(Theorem~\ref{thm:zero}).  For every fixed $\gamma>0$, however small, one
prime instead carries asymptotically all logarithmic mass; see
Corollary~\ref{cor:giant-prime}.

\emph{The second transition, at $\gamma=1$.}  This transition concerns the
scale of the cofactor $R_x$ after the giant prime has formed.  Our central
result (Theorem~\ref{thm:cofactor}) identifies the entire cofactor law,
not only one of its statistics.  For every fixed $\gamma>0$, it is
asymptotic in total variation to the following Euler product law on
$\N=\{1,2,3,\ldots\}$.  For $a>0$, define
\begin{equation}\label{eq:Qa-intro}
 \QQ_{a}(m)=\frac{d_\theta(m)\,m^{-1-a}}{\zeta(1+a)^\theta},
 \qquad m\in\N,
 \qquad a>0.
\end{equation}
By \eqref{eq:dtheta-euler}, $\QQ_a$ is the probability law induced by the
Euler product for $\zeta(1+a)^\theta$.  The cofactor approximation uses this family at
$a=a_x=\beta\gamma(\log x)^{\gamma-1}$.
The parameter $a_x$ is the slope of the reward at $\log x$.  It tends to
$0$, equals $\beta$, or tends to infinity according as $0<\gamma<1$,
$\gamma=1$, or $\gamma>1$.  These alternatives give, respectively, a
gamma total with an independent $\PD(\theta)$ partition, the discrete law
$\QQ_\beta$, and concentration of $R_x$ at one.  Theorem~\ref{thm:joint}
adds the cutoff location.  Thus the two transitions in the title refer to
different scales: $\gamma=0$ changes the macroscopic partition, whereas
$\gamma=1$ changes the residual scale after the giant prime is present.

\emph{The normalizing constant.}  As a companion to the cofactor law, the
same argument gives $Z_x$ in all
four regimes.  At $\theta=\beta=\gamma=1$ it recovers
$\sum_{n\le x}P^+(n)\sim(\pi^2/12)x^{2}/\log x$ of Alladi and Erd\H{o}s
\cite{AlladiErdos1977}.  Jakimczuk \cite{Jakimczuk2013} gave a shorter
direct proof of the corresponding asymptotic for
$\sum_{n\le x}P^+(n)^k$ when $k$ is a fixed positive integer.
Formula~\eqref{eq:Z-unified} allows every real $\beta>0$ and includes the
generalized divisor weight $d_\theta$.

\subsection{The prime number theorem mechanism and proof strategy}
\label{sec:mechanism}

The prime number theorem does more here than estimate the normalizing
constant: it determines the reference scale against which the external
field acts.  To see this, temporarily replace $V_\gamma$ by a smooth
increasing function $V:[0,\infty)\to\R$.  On the coordinate $s=\log p$, the pushforward
of prime counting measure has local asymptotic density $e^s\,ds/s$.
For fields with regular growth, this suggests the Laplace approximation
\[
 \sum_{p\le x}\exp\{V(\log p)\}
 \approx \frac{x\exp\{V(\log x)\}}
 {\log x\{1+V'(\log x)\}}.
\]
For $V=V_\gamma$, Lemma~\ref{lem:weighted-pnt} proves this asymptotic,
and Lemma~\ref{lem:weighted-prime-ratios} gives the uniform ratios
needed below.  Thus the prime number theorem supplies the arithmetic
density of states on which the field acts.

Write $n=pm$, where $p=P^+(n)$, and put $L=\log x$ and $r=\log m$.
The decomposition by the largest prime reduces the weight associated with
$m$ to $d_\theta(m)$ times a weighted prime sum with upper bound $x/m$.
After removing factors that do not depend on $m$, the preceding
approximation gives
\[
 d_\theta(m)m^{-1}\exp\{V(L-r)-V(L)\},
\]
up to slowly varying factors.  The factor $m^{-1}$ is forced by the prime
number theorem.  It is the critical Euler product weight, and the external
field changes it through the loss $V(L)-V(L-r)$.

Two scales of this loss govern the phase diagram.  Return to
$V=V_\gamma$.  On the macroscopic scale $r=\delta L$, for fixed
$\delta\in(0,1)$,
\[
 V(L)-V((1-\delta)L)
 =\beta\{1-(1-\delta)^\gamma\}L^\gamma.
\]
For every fixed $\gamma>0$, the resulting exponential penalty dominates
the polynomial growth in $L$ of the accumulated critical weight.  This
predicts that $r/L$ vanishes and hence that one prime carries
asymptotically all logarithmic mass.  At $\gamma=0$ the penalty is absent,
and the Poisson--Dirichlet partition remains.

Once $r=o(L)$, the local behavior of the same field becomes relevant:
\[
 V(L-r)-V(L)=-a_xr+O\left(\frac{a_xr^2}{L}\right),
 \qquad a_x=V'(L)=\beta\gamma L^{\gamma-1}.
\]
If this remainder is negligible on the scale of the
cofactor, its effective weight is $d_\theta(m)m^{-1-a_x}$, precisely the
Euler product law $\QQ_{a_x}$ in \eqref{eq:Qa-intro}.  The limits
$a_x\to0$, $a_x=\beta$, and $a_x\to\infty$ distinguish the three positive
regimes.  The first transition is therefore detected by a macroscopic
increment of the field, whereas the second is detected by its local
derivative.  Both reflect the interaction of the same field with the
prime density.

To prove these limits, we combine an exact largest prime decomposition
with uniform weighted prime estimates, an $L^1(\QQ_{a_x})$ comparison,
and analyses of the moving Euler product law and the cutoff location.

\subsection{Related work}\label{sec:related}

The closest arithmetic precedents come from multiplicative reference
weights, greatest prime factor sums, and zeta laws.  Elboim and Gorodetsky
\cite{ElboimGorodetsky2024} establish the Poisson--Dirichlet limit for a
broad class of multiplicative weights that includes $d_\theta$.  Alladi and
Erd\H{o}s \cite{AlladiErdos1977,AlladiErdos1979} initiated the study of
large greatest prime factor sums; the exponent one asymptotic used here is
in \cite{AlladiErdos1977}, and Jakimczuk \cite{Jakimczuk2013} gives a
shorter direct proof for fixed positive integer powers.  Related sums with additive
functions were studied by De Koninck, K\'atai and Mercier
\cite{DeKoninckKataiMercier1989}.  At $\theta=1$, $\QQ_a$ is the usual
zeta law on the positive integers; if $M_a\sim\QQ_a$, then $-\log M_a$
has the Riemann zeta distribution studied by Lin and Hu
\cite{LinHu2001}.  Fixed rank asymptotics in the zeta and harmonic
settings go back to Lloyd \cite{Lloyd1984}; Hirth
\cite{Hirth1997} develops a related GEM and Poisson--Dirichlet viewpoint.
The present model differs from these works because the
reward depending on $P^+(n)$ is not multiplicative; the total variation
theorem nevertheless identifies the law of the entire cofactor.

Further work on Billingsley's theorem and its probabilistic structure
includes the scale invariant Poisson representation of Arratia, Barbour and
Tavar\'e \cite{ABT1999}, Tenenbaum's quantitative estimate
\cite{Tenenbaum2000}, the direct proof of Arratia and Kochman
\cite{ArratiaKochman2014}, and the optimal order $\ell^1$ coupling estimate
of Haddad and Koukoulopoulos \cite{HaddadKoukoulopoulos2025}.  Arratia,
Kochman and Miller \cite{ArratiaKochmanMiller2014} developed a criterion
based on multi-intensities and extended the theorem to normed arithmetic
semigroups.

Recent variants include Poisson--Dirichlet limits under the regularity,
level of distribution one, and congruence uniformity assumptions of
Bharadwaj and Rodgers \cite{BharadwajRodgers2026}, and quantitative
Wasserstein approximations by attenuated Poisson--Dirichlet laws for
distinct prime divisors under harmonic sampling
\cite{BernalRamirezTorresFloresJaramillo2026}.  Dawson and Feng studied
large deviations for $\PD(\theta)$ as $\theta\to\infty$, including laws
under selection \cite{DawsonFeng2006}; see also
\cite{Feng2010}.  Here $\theta$ is fixed.

Random permutations with multiplicative cycle weights provide a natural
comparison: the permutation decomposes into cycles and the weight
factorizes over them \cite{BetzUeltschiVelenik2011,ErcolaniUeltschi2014}.
Our deformation instead rewards the largest prime factor alone.  The
Feynman cycle representation of the ideal Bose gas leads naturally to
spatial random permutations.  In this setting, Betz and Ueltschi
\cite{BetzUeltschiPD2011} proved a $\PD(\vartheta)$ limit for normalized
macroscopic cycle lengths, with $\vartheta$ determined by the cycle
weights.  K\"onig, Vogel and Zass \cite{KonigVogelZass2025} proved the
$\PD(1)$ limit for macroscopic Feynman loops in the canonical free Bose
gas.  Sun \cite{SunBEC2026} obtained more general marked
Poisson--Kingman bridge limits.  The connection with
Section~\ref{sec:euler} is most direct in the gamma process case: here the
Euler product limit is unconditioned, with a random $\Gam(\theta,1)$ total
independent of its normalized $\PD(\theta)$ jumps, whereas Sun's canonical
bridge fixes the total mass.  More general heat trace profiles need not
give gamma bridges.

\subsection*{Organization}

Section~\ref{sec:results} states the main results.
Section~\ref{sec:arithmetic} proves the weighted prime sum estimates, the
cofactor approximation, and the partition function asymptotics.
Section~\ref{sec:euler} analyzes the Euler product limit, and
Section~\ref{sec:joint} proves the joint limits and their consequences.

\section{Main results}\label{sec:results}

This section begins with the unperturbed law as a comparison, then states
the cofactor approximation and the resulting phase diagram.  The
partition function asymptotics follow afterward.  Throughout, all logarithms are
natural.  Recall that
$a_x=\beta\gamma(\log x)^{\gamma-1}$ and $b_x=1+a_x$.
We set $\R_+=[0,\infty)$ and give $\N$ the discrete topology.
Unless another parameter is indicated,
$\Longrightarrow$, $\xrightarrow{\Pp}$, and $\sim$ denote convergence in
distribution, convergence in probability, and asymptotic equivalence as
$x\to\infty$.  Constants in $O(\cdot)$ may depend on
$\theta,\beta,\gamma$, but not on $x$, and $o(1)$ denotes a quantity tending
to zero.

\subsection{The unperturbed regime}

We use the state space
\[
 \ell^1_\downarrow
 =\left\{\mathbf s=(s_i)_{i\ge1}:
 s_1\ge s_2\ge\cdots\ge0,\ \sum_is_i<\infty\right\},
\]
equipped with the $\ell^1$ norm.  The vector $\W(n)$ defined in
Section~\ref{sec:overview} lies in its unit simplex.

At $\gamma=0$ the tilt is constant away from the integer one.  The
Poisson--Dirichlet component of the following result is the generalized
Billingsley theorem of Elboim and Gorodetsky
\cite[Theorem~1.1]{ElboimGorodetsky2024}; the joint limit with $N_x/x$
and the independence are established in Section~\ref{sec:zero}.

\begin{theorem}[Unperturbed joint limit]\label{thm:zero}
Suppose $\gamma=0$.  Then
\begin{equation}\label{eq:zero-joint}
 \left(\frac{N_x}{x},\W(N_x)\right)
 \Longrightarrow(U,\bP)
\end{equation}
in $[0,1]\times\ell^1_\downarrow$, where $U\sim\Unif(0,1)$,
$\bP=(S_i)_{i\ge1}\sim\PD(\theta)$, and the two limits are independent.
\end{theorem}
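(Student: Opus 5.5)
At $\gamma=0$ the tilt only reweights the single integer $n=1$. So $\Pp_x$ is the cutoff law weighted by $d_\theta$, conditioned on nothing, with the mass at $n=1$ changed by a factor $e^{-\beta}$. By \eqref{eq:model}, $Z_x=e^{\beta}\sum_{n\le x}d_\theta(n)-(e^\beta-1)$. The reference total $D_\theta(x)=\sum_{n\le x}d_\theta(n)$ satisfies $D_\theta(x)\sim x(\log x)^{\theta-1}/\Gamma(\theta)$ (Selberg--Delange, or a Tauberian argument from \eqref{eq:dtheta-euler}), so the total variation distance between $\Pp_x$ and the reference law $\Pp^0_x(n)=d_\theta(n)/D_\theta(x)$ is $O(1/D_\theta(x))\to0$. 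Hence it suffices to prove \eqref{eq:zero-joint} under $\Pp^0_x$. The $\W$-marginal under $\Pp^0_x$ converges to $\PD(\theta)$ by \cite[Theorem~1.1]{ElboimGorodetsky2024}. The real content is therefore the joint limit with $N_x/x$ and the asymptotic independence.

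For the independence, I will compare the law at cutoff $x$ with the law at cutoff $ux$ for fixed $u\in(0,1]$. Let $F:\ell^1_\downarrow\to\R$ be bounded and Lipschitz. Then
\[
\E^0_x\bigl[\one_{\{N_x\le ux\}}F(\W(N_x))\bigr]
=\frac{D_\theta(ux)}{D_\theta(x)}\,\E^0_{ux}\bigl[F(\W(N_{ux}))\bigr].
\]
Since $D_\theta(ux)/D_\theta(x)\to u$ by the regular variation of $D_\theta$, and the expectation converges to $\E F(\bP)$ by Elboim--Gorodetsky applied at cutoff $ux\to\infty$, the right side tends to $u\,\E F(\bP)$. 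The quantity $\W(n)$ is normalized by $\log n$, not $\log x$, so no rescaling issue arises. The case $u=0$ is trivial. Functions of the form $\one_{[0,u]}(t)F(\mathbf s)$ with $u$ ranging over $(0,1]$ and $F$ bounded Lipschitz form a convergence-determining class for product measures on $[0,1]\times\ell^1_\downarrow$ whose first marginal is continuous. This identifies the limit as $\Unif(0,1)\otimes\PD(\theta)$. Tightness is automatic, since the first coordinate lives in $[0,1]$ and the second in the closed unit simplex with $\ell^1$-tight marginals given by the marginal convergence.

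The only step needing care is the class argument. Distribution functions in $t$, multiplied by a convergence-determining class for $\ell^1_\downarrow$, determine joint weak limits. Concretely, any subsequential limit $\mu$ satisfies $\mu([0,u]\times\cdot)=u\,\PD(\theta)$ on bounded Lipschitz functions, and hence on all Borel sets. A $\pi$-system argument then gives $\mu=\Unif\otimes\PD(\theta)$. The boundary $\{t=u\}$ has $\mu$-measure zero because the $t$-marginal is uniform. This works after first observing that the $t$-marginal limit is $\Pr(N_x\le ux)\to u$, which is the special case $F\equiv1$.
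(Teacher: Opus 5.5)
Your proposal is correct and follows the paper's proof essentially step for step. It uses the same $O(D_\theta(x)^{-1})$ total variation reduction to the $d_\theta$-weighted cutoff law, and the same exact consistency identity between cutoffs $x$ and $ux$ together with $D_\theta(ux)/D_\theta(x)\to u$. It also reaches the same determining-class conclusion, which your subsequential-limit and $\pi$-system argument spells out more carefully than the paper does.

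The one step you compress is the marginal input. The paper reads Elboim--Gorodetsky as giving $(\log P_i(N_x)/\log x)_{i\ge1}\Rightarrow\PD(\theta)$ only in the product topology and with normalization by $\log x$. Your bounded Lipschitz test functions need $\W(N_x)\Rightarrow\PD(\theta)$ in $\ell^1_\downarrow$. The paper gets there using $\log N_x/\log x\to1$, which follows from $D_\theta(x^{1-\varepsilon})/D_\theta(x)\to0$, and the fact that coordinatewise convergence plus convergence of the sums gives $\ell^1$ convergence; you should include those two steps.
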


\subsection{The cofactor law}

The cofactor $R_x$ is defined in Section~\ref{sec:overview}, and the Euler
product law $\QQ_a$ in \eqref{eq:Qa-intro}.  We write $\Law(X)$ for the
distribution of $X$ and, for probability measures on a countable space,
use
\[
 d_{\TV}(\mu,\nu)=\frac12\sum_m|\mu(m)-\nu(m)|.
\]

For every positive exponent, the cofactor has a single asymptotic law,
which covers all three regimes at once.

\begin{theorem}[Cofactor approximation for $\gamma>0$]
\label{thm:cofactor}
Suppose $\gamma>0$.  Then, as $x\to\infty$,
\begin{equation}\label{eq:TV-main}
 d_{\TV}\bigl(\Law(R_x),\QQ_{a_x}\bigr)
 \longrightarrow0.
\end{equation}
\end{theorem}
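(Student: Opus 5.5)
We decompose by the largest prime. Every $n\ge2$ can be written uniquely as $n=pm$ with $p=P^+(n)$ and $P^+(m)\le p$. Since $d_\theta$ is multiplicative and $d_\theta(p^{k})$ depends only on $k$, the factor $d_\theta(pm)$ equals $d_\theta(m)\cdot(\text{a bounded local correction})$ whenever $p\mid m$. That case forces $p^2\mid n$ with $p$ the largest prime, and we will show its total weight is negligible. For $p\nmid m$ we have exactly $d_\theta(pm)=\theta\, d_\theta(m)$. This gives
\[
 Z_x\,\Pp_x(R_x=m)=\theta\, d_\theta(m)\!\!\sum_{\substack{P^+(m)<p\le x/m}}\!\! e^{V_\gamma(\log p)}+(\text{terms with }p\mid m),
\]
and the same formula with $P^+(m)\le p$ for the full sum. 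Write $S(y)=\sum_{p\le y}e^{V_\gamma(\log p)}$. The goal is to compare $w_x(m):=d_\theta(m)\,S(x/m)/S(x)$ with $c_x\,d_\theta(m)m^{-1-a_x}$ in $\ell^1(\N)$. Once we prove $\sum_m|w_x(m)-c_x d_\theta(m)m^{-1-a_x}|=o(c_x\zeta(1+a_x)^\theta)$, together with negligibility of the lower constraint $p>P^+(m)$ and of the $p\mid m$ terms, normalizing both sides gives \eqref{eq:TV-main}. The standard fact that $\|f/\|f\|_1-g/\|g\|_1\|_1\le 2\|f-g\|_1/\|g\|_1$ converts the $\ell^1$ bound into total variation.

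The core input is Lemma~\ref{lem:weighted-prime-ratios}, the uniform ratio estimate. For $r=\log m\le \epsilon_x L$, with $\epsilon_x\to0$ chosen so that $a_x r^2/L\to0$ on the bulk of $\QQ_{a_x}$, it should give
\[
 \frac{S(x/m)}{S(x)}=\frac1m\,e^{V(L-r)-V(L)}(1+o(1))=m^{-1-a_x}(1+o(1)),
\]
uniformly. The bulk of $\QQ_{a_x}$ sits at $r\asymp 1/a_x$ when $a_x\to0$ and at bounded $r$ otherwise. Then $a_xr^2/L\asymp 1/(a_xL)=L^{-\gamma}/(\beta\gamma)\to0$, so a window such as $r\le K/a_x$ with $K\to\infty$ slowly (intersected with $r\le\epsilon_xL$) works. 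On this window the comparison is therefore $o(1)$ relative mass. The constant $c_x$ is $1$ up to the factor $\theta$, which is absorbed by normalization.

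For the tails, we need that both the $\QQ_{a_x}$ mass and the $w_x$ mass beyond the window are $o$ of the total. For $\QQ_{a_x}$ this follows from the Gamma-type concentration $\QQ_{a}(\log m>K/a)\to0$ as $a\to0$, obtained by a Rankin/Chernoff bound using $\zeta(1+a-\eta)^\theta/\zeta(1+a)^\theta$, and it is trivial for $a_x\ge\beta$. For $w_x$ we use concavity of $s\mapsto s^\gamma$ when $\gamma\le1$, and a direct bound when $\gamma>1$. Together with the crude bound $S(y)\ll y\,e^{V(\log y)}$, which follows from Lemma~\ref{lem:weighted-pnt}, these give $w_x(m)\ll d_\theta(m)m^{-1}\exp\{-(V(L)-V(L-r))\}\,L^{O(1)}$ up to the ratio of the prefactors $1+V'$. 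Summing over dyadic ranges of $m$ with $\sum_{m\le y}d_\theta(m)/m\asymp(\log y)^\theta$, the decay $\exp\{-a_x r\}$ in the mesoscopic range and $\exp\{-c\delta L^\gamma\}$ in the macroscopic range beat the polynomial factors, because $e^{-cL^\gamma}L^{O(1)}\to0$. The constraint $p>P^+(m)$ only matters when $P^+(m)>x/m$ roughly, so it lies inside the tail. The $p\mid m$ terms are bounded by $\sum_p e^{V(\log p)}\sum_{m'\le x/p^2}d_\theta(m')$, which is $o(Z_x)$ by the same ratio estimate.

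The main obstacle is the macroscopic and intermediate tail: $r$ between $\epsilon_x L$ and $L$, where $m$ is large and $x/m$ may be small. There the asymptotic of Lemma~\ref{lem:weighted-pnt} is not uniform, the factor $(1+V'(\log(x/m)))^{-1}$ differs from its value at $L$, and for small $\gamma$ the exponential penalty $\beta(1-(1-\delta)^\gamma)L^\gamma$ must beat the $L^{\theta}$ growth of $\sum d_\theta(m)/m$. We would first try a Chebyshev-type upper bound $S(y)\ll y e^{V(\log y)}/\log y+e^{V(\log y)}$, valid for all $y\ge2$. We would then treat $m>x^{1-\epsilon}$, where $x/m$ is small, separately by the trivial bound $d_\theta$-mass $\ll x(\log x)^{\theta-1}e^{V(\epsilon L)}$, which is $o(Z_x)$ since $Z_x\gg S(x)\gg xe^{V(L)}/(L(1+a_x))$.
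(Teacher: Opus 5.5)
Your plan is correct and is essentially the paper's proof. You decompose at the largest prime and use Lemma~\ref{lem:weighted-pnt} directly at $x/m$ and at $x$; note that Lemma~\ref{lem:weighted-prime-ratios} alone only covers bounded $\log m$, so it does not give the window asymptotic. This yields the weight $m^{-1-a_x}$ on a window $a_x\log m\le K$ with $K^2=o(L^\gamma)$. You bound the $\QQ_{a_x}$ tail by the Chernoff ratio $\bigl(\zeta(1+(1-\eta)a)/\zeta(1+a)\bigr)^\theta$, control the cofactor tail by concavity when $\gamma\le1$ and by monotonicity of $b$ when $\gamma>1$, and discard integers whose largest prime is small by the crude bound $e^{\beta(\epsilon L)^\gamma}D_\theta(x)$. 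The paper does all of this and differs only in bookkeeping. It restricts to $P^+(n)>\sqrt x$, where $m<\sqrt x<p$ makes $p\nmid m$ and $p>P^+(m)$ automatic; your $p\mid m$ integers already lie in the discarded sector $P^+(n)\le\sqrt x$. For $\gamma\le1$ it proves the pointwise domination $g_x(m)\le 2C$ on $m<\sqrt x$, which is exactly what your Chebyshev-type bound gives there, and this avoids the $L^{O(1)}$ loss in your middle paragraph. If you kept that loss, an arbitrarily slowly growing $K$ would not suffice, and you would need $K\gtrsim\log L$, as in the paper's choice $K_L=4\log L$.
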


The approximation is in absolute total variation; in the supercritical
regime it does not assert relative asymptotics for rare composite events.

Theorem~\ref{thm:cofactor} determines the marginal cofactor behavior by
reducing it to $\QQ_{a_x}$.  Section~\ref{sec:euler} identifies the limit
of this Euler product law when $a_x\to0$; the cases $a_x=\beta$ and
$a_x\to\infty$ follow directly from its definition.  The next theorem
adds the cutoff location and its joint behavior with the cofactor.  Its
proof in Section~\ref{sec:joint} uses the weighted prime sum ratios from
Lemma~\ref{lem:weighted-prime-ratios}.

\subsection{The phase diagram}

Together with Theorem~\ref{thm:zero}, the next theorem gives the
regime specific joint limits.  For $\gamma>0$, the regimes differ through
the limiting behavior of the boundary slope $a_x$.  The theorem adds the
cutoff location $N_x/x$ to the cofactor limits.  Each state space below
has its product topology.

\begin{theorem}[The phase diagram for $\gamma>0$]\label{thm:joint}
Suppose $\gamma>0$.

\begin{enumerate}[label=\textup{(\roman*)},leftmargin=9mm]
\item If $0<\gamma<1$, then
\begin{equation}\label{eq:joint-limit}
 \left(
 \frac{N_x}{x},
 a_x\log R_x,
 \W(R_x)
 \right)
 \Longrightarrow
 (U,G,\bP),
\end{equation}
in $[0,1]\times\R_+\times\ell^1_\downarrow$, where
$U\sim\Unif(0,1)$, $G\sim\Gam(\theta,1)$, $\bP\sim\PD(\theta)$,
and the three limits are mutually independent.

\item If $\gamma=1$, then
\begin{equation}\label{eq:critical-joint}
 \left(\frac{N_x}{x},R_x\right)
 \Longrightarrow(B_\beta,M_\beta),
\end{equation}
in $[0,1]\times\N$, where $M_\beta\sim\QQ_\beta$,
\[
 \Pp(B_\beta\le u)=u^{1+\beta},
 \qquad 0\le u\le1,
\]
and $B_\beta$ and $M_\beta$ are independent.

\item If $\gamma>1$, then
\begin{equation}\label{eq:super-boundary}
 \left(b_x\log\frac{x}{N_x},R_x\right)
 \Longrightarrow(E,1),
\end{equation}
in $\R_+\times\N$, where $E\sim\Exp(1)$ has rate one.  Consequently
$N_x/x\to1$ in
probability.
\end{enumerate}
\end{theorem}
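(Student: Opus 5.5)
We plan to prove Theorem~\ref{thm:joint} by working directly with the exact largest prime decomposition. Every $n\ge2$ is uniquely $n=pm$ with $p=P^+(n)$ and $P^+(m)\le p$. Hence
\[
 \Pp_x(N_x=pm)=\frac{d_\theta(m)\,d_\theta(p^{\,\cdot})\text{-correction}\;e^{V_\gamma(\log p)}}{Z_x},
\]
where the correction is the factor $d_\theta(pm)/d_\theta(m)$. This factor equals $\theta$ unless $p\mid m$, and $p\mid m$ is negligible once $p$ is large.

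Summing over $p\le x/m$, the joint law of $(N_x/x,R_x)$ is described by the weighted prime sums
\[
 S(y)=\sum_{p\le y}e^{V_\gamma(\log p)}.
\]
By Lemma~\ref{lem:weighted-pnt} and the uniform ratios of Lemma~\ref{lem:weighted-prime-ratios}, for $m$ in a set carrying $\QQ_{a_x}$-mass $1-o(1)$ and for fixed $u\in(0,1]$, we expect
\[
 \frac{S(ux/m)}{S(x/m)}\longrightarrow
 \begin{cases}
  u, & 0<\gamma<1,\\
  u^{1+\beta}, & \gamma=1,
 \end{cases}
 \qquad
 \frac{S(e^{-t/b_x}x/m)}{S(x/m)}\longrightarrow e^{-t}\quad(\gamma>1).
\]
The reason is that, locally near $\log(x/m)$, the density is $e^{s}e^{V(s)}/s$ with effective exponent $1+V'(\log(x/m))$. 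Since $\log m=o(L)$ on the relevant set, $V'(\log(x/m))\sim a_x$, and for $\gamma>1$ the rescaling by $b_x$ absorbs the growing slope.

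The key structural step is conditional factorization. Conditionally on $R_x=m$, the variable $N_x/x=pm/x$ has distribution function $S(ux/m)/S(x/m)$, up to the excluded events $p\mid m$ or $P^+(m)>p$, which have vanishing probability. Combining this with Theorem~\ref{thm:cofactor}, we obtain for bounded continuous $f$ and $g$:
\[
 \E_x\bigl[f(N_x/x)g(R_x)\bigr]
 =\sum_m \Pp_x(R_x=m)\,g(m)\,\E\bigl[f\bigm|R_x=m\bigr]
 =\E\bigl[f(B)\bigr]\,\E_{\QQ_{a_x}}[g]+o(1),
\]
where $B$ is the regime-specific limit. This holds uniformly because the conditional distribution functions converge uniformly in $u$ for $m$ in the good set. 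The analogous statement for $b_x\log(x/N_x)$ holds when $\gamma>1$.

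It remains to identify the limit of $\QQ_{a_x}$ in each regime.

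\emph{Case $\gamma=1$.} Here $a_x=\beta$, so $R_x\Rightarrow M_\beta$ directly from Theorem~\ref{thm:cofactor}.

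\emph{Case $\gamma>1$.} We have $\QQ_{a_x}(1)=\zeta(1+a_x)^{-\theta}\to1$, so $R_x=1$ with high probability. Then $N_x/x\to1$ in probability follows from $b_x\log(x/N_x)=O_\Pp(1)$ and $b_x\to\infty$.

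\emph{Case $0<\gamma<1$.} Here $a_x\to0$, and we invoke the Euler product analysis of Section~\ref{sec:euler}: under $\QQ_a$ with $a\to0$,
\[
 \bigl(a\log M,\ \W(M)\bigr)\Longrightarrow(G,\bP),
\]
with $G$ and $\bP$ independent, via the gamma subordinator representation. Conditional on $m$, the location law does not depend on $g$. Therefore the three-fold independence follows from the factorization above, applied with $g(m)=h(a_x\log m,\W(m))$, and the product limit is $(U,G,\bP)$. Weak convergence in the product space then follows from convergence of expectations of products of bounded continuous functions.

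The main obstacle is the uniformity of the prime sum ratios in $m$. We must find a set $\mathcal M_x$ with $\QQ_{a_x}(\mathcal M_x)\to1$ on which $\log m\le \varepsilon_x L$ and the error $a_x(\log m)^2/L$ is $o(1)$. This is a genuine constraint for $\gamma<1$, where $\log m\asymp a_x^{-1}$; there one needs $a_x^{-1}=o(L)$, which holds because $a_x^{-1}\asymp L^{1-\gamma}$. For $\gamma>1$, we must also check that the ratio limit is taken at the moving slope $1+V'(\log(x/m))\sim b_x$, which requires $m$ bounded, as ensured by the $\QQ_{a_x}$ concentration at $1$. I would first attempt this step by applying Lemma~\ref{lem:weighted-prime-ratios} directly with $y=x/m$ and $u$ ranging over a compact set, and then discard the complement of $\mathcal M_x$ using the total variation bound.
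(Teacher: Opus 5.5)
Your proposal is correct and is essentially the paper's proof. It uses the exact largest-prime decomposition, the uniform ratios of Lemma~\ref{lem:weighted-prime-ratios} on a cofactor set of asymptotically full mass, the total variation transfer from Theorem~\ref{thm:cofactor}, Proposition~\ref{prop:euler-limit} when $0<\gamma<1$, and a factorization over a determining class for joint convergence and independence. The only difference is bookkeeping: the paper restricts to $\{P^+(N_x)>\sqrt x\}$, so that $p\nmid m$ automatically and the lower cutoff in the prime sums is $A(\sqrt x)$, rather than conditioning on $R_x=m$ with lower cutoff $P^+(m)$. Also, the location ratios only need $\log m=o(L)$; the finer constraint $a_x(\log m)^2/L=o(1)$ that you single out as the main obstacle is used only inside Theorem~\ref{thm:cofactor}.
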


The three positive regimes share the following macroscopic conclusion.

\begin{corollary}[Giant prime]\label{cor:giant-prime}
For every $\gamma>0$,
\begin{equation}\label{eq:giant}
 \frac{\log P^+(N_x)}{\log N_x}
 \xrightarrow{\Pp}1,
\end{equation}
where the ratio is assigned the value one on $\{N_x=1\}$, and
\begin{equation}\label{eq:full-degenerate}
 \W(N_x)
 \xrightarrow{\Pp}(1,0,0,\ldots)
 \quad\text{in }\ell^1_\downarrow.
\end{equation}
\end{corollary}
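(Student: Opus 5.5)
The plan is to derive the corollary from Theorem~\ref{thm:cofactor} and Theorem~\ref{thm:joint}. The key input is that $\log R_x/\log x\to0$ in probability, together with $\log N_x/\log x\to1$ in probability.

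\emph{Step 1: the cofactor is small on the logarithmic scale.} By Theorem~\ref{thm:joint}, in each regime $\log R_x=o_{\Pp}(\log x)$.
\begin{itemize}
\item For $0<\gamma<1$, $a_x\log R_x$ is tight. Since $a_x\log x=\beta\gamma L^{\gamma}\to\infty$, this gives $\log R_x/\log x=(a_x\log R_x)/(a_xL)\to0$ in probability.
\item For $\gamma=1$, $R_x$ converges in law to $M_\beta$, so it is tight in $\N$. Hence $\log R_x=O_{\Pp}(1)$.
\item For $\gamma>1$, $R_x=1$ with high probability.
\end{itemize}
Alternatively, one could use Theorem~\ref{thm:cofactor} directly. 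It suffices to show $\QQ_{a_x}(m>x^{\varepsilon})\to0$ for each $\varepsilon>0$. By Markov's inequality applied to $m^{\eta}$ with $\eta=a_x/2$,
\[
\QQ_{a_x}(m>x^\varepsilon)\le x^{-\varepsilon\eta}\,\frac{\zeta(1+a_x/2)^\theta}{\zeta(1+a_x)^\theta}.
\]
The zeta ratio is $O(1)$ when $a_x\to0$ (it behaves like $2^\theta$) and is bounded otherwise. Meanwhile $\varepsilon\eta\log x\to\infty$, so the bound tends to zero. Quoting Theorem~\ref{thm:joint} is shorter, so I will use that route.

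\emph{Step 2: the sampled integer is large on the logarithmic scale.} We need $\log N_x/\log x\to1$ in probability, and $\Pp_x(N_x=1)\to0$. Theorem~\ref{thm:joint} gives that $N_x/x$ converges in law to $U$ or $B_\beta$, both of which have no atom at $0$, or to $1$ in probability when $\gamma>1$. So for every $\delta>0$ we have $\Pp(N_x\le x^{1-\delta})\le\Pp(N_x/x\le\eta)+o(1)$ for any fixed $\eta$. Letting $\eta\downarrow0$ gives the claim, and in particular $N_x\ge2$ with high probability.

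\emph{Step 3: combine.} On $\{N_x\ge2\}$ we have
\[
\frac{\log P^+(N_x)}{\log N_x}=1-\frac{\log R_x}{\log N_x}.
\]
By Steps 1 and 2, $\log R_x/\log N_x\to0$ in probability, which gives \eqref{eq:giant}. For \eqref{eq:full-degenerate}, note that $W_1(N_x)$ equals the ratio above. All other coordinates are nonnegative and sum to $1-W_1(N_x)$, since $\sum_i\log P_i(n)=\log n$. Therefore
\[
\|\W(N_x)-(1,0,\ldots)\|_{\ell^1}=2\bigl(1-W_1(N_x)\bigr)=2\log R_x/\log N_x\to0
\]
in probability. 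On $\{N_x=1\}$ the distance is zero by convention.

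The main obstacle is only the bookkeeping in Step 2, namely ruling out mass near $N_x/x=0$. This follows because the limit laws $U$ and $B_\beta$ are continuous at $0$, so no genuine difficulty is expected.
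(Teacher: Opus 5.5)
Your proposal is correct and follows essentially the same route as the paper: $\log N_x/\log x\to1$ because the limits of $N_x/x$ put no mass at zero, $\log R_x/\log x\to0$ by the tightness supplied by Theorem~\ref{thm:joint}, and the identity $\|\W(N_x)-(1,0,\ldots)\|_1=2\log R_x/\log N_x$ on $\{N_x\ge2\}$. The only cosmetic difference is that for $\gamma>1$ the paper takes the shortcut $\Pp_x(N_x\text{ is prime})\to1$, on which event $\W(N_x)=(1,0,\ldots)$ exactly, whereas you run the same argument uniformly in all three regimes.
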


The following consequence displays both thresholds on the scale of the
missing logarithmic mass.  Let
\[
 \mathcal D_x=
 \begin{cases}
  1-\dfrac{\log P^+(N_x)}{\log N_x},&N_x\ge2,\\[2mm]
  0,&N_x=1.
 \end{cases}
\]

\begin{corollary}[Scale of the missing logarithmic mass]\label{cor:defect}
Let $(S_i)_{i\ge1}\sim\PD(\theta)$, and let $G$ and $M_\beta$ be as in
Theorem~\textup{\ref{thm:joint}}.  Then
\[
 \begin{array}{c|c}
 \gamma=0
 &\mathcal D_x\Longrightarrow1-S_1,\\[1mm]
 0<\gamma<1
 &(\log x)^\gamma \mathcal D_x\Longrightarrow G/(\beta\gamma),\\[1mm]
 \gamma=1
 &(\log x)\mathcal D_x\Longrightarrow\log M_\beta,\\[1mm]
 \gamma>1
 &\Pp_x(\mathcal D_x=0)\longrightarrow1.
 \end{array}
\]
On the ordinary multiplicative scale,
\begin{equation}\label{eq:ordinary-ratio-regimes}
 \frac{P^+(N_x)}{N_x}
 \Longrightarrow
 \begin{cases}
  0,&0\le\gamma<1,\\
  M_\beta^{-1},&\gamma=1,\\
  1,&\gamma>1.
 \end{cases}
\end{equation}
\end{corollary}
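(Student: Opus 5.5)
The plan is to read off every case from the identity $\mathcal D_x=\log R_x/\log N_x$ on $\{N_x\ge2\}$. This holds because $N_x=P^+(N_x)R_x$, so $\log N_x-\log P^+(N_x)=\log R_x$. Combined with Theorems~\ref{thm:zero}, \ref{thm:cofactor} and \ref{thm:joint}, the corollary then reduces to controlling the denominator $\log N_x$ on the scale $\log x$.

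The first step is $\log N_x/\log x\xrightarrow{\Pp}1$ for every $\gamma\ge0$. For $0\le\gamma\le1$ this follows from the limit of $N_x/x$ in Theorems~\ref{thm:zero} and \ref{thm:joint}, since $U$ and $B_\beta$ have no atom at $0$. Indeed, for $\varepsilon>0$ we have $\Pp_x(N_x\le\varepsilon x)\to\Pp(U\le\varepsilon)$ (resp. $\varepsilon^{1+\beta}$), which is small. Then $\log N_x\ge\log x+\log\varepsilon$ with high probability, and in particular $\Pp_x(N_x=1)\to0$. For $\gamma>1$, the statement $N_x/x\to1$ is part of Theorem~\ref{thm:joint}(iii).

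With the first step in hand, the cases go as follows.
\begin{itemize}
\item $\gamma=0$: $\mathcal D_x=1-W_1(N_x)$, where $W_1$ is the first coordinate of $\W(N_x)$. This coordinate map is $1$-Lipschitz on $\ell^1_\downarrow$, so the continuous mapping theorem and Theorem~\ref{thm:zero} give $\mathcal D_x\Rightarrow1-S_1$.
\item $0<\gamma<1$: write
\[
(\log x)^\gamma\mathcal D_x=\frac{a_x\log R_x}{\beta\gamma}\cdot\frac{\log x}{\log N_x},
\]
using $(\log x)^\gamma=a_x\log x/(\beta\gamma)$. The first factor converges in law to $G/(\beta\gamma)$ by Theorem~\ref{thm:joint}(i), and the second tends to $1$ in probability. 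Slutsky's lemma concludes; on $\{N_x=1\}$, an event of vanishing probability, the value is irrelevant.
\item $\gamma=1$: $(\log x)\mathcal D_x=\log R_x\cdot(\log x/\log N_x)$. Here $R_x\Rightarrow M_\beta$ on the discrete space $\N$, so $\log R_x\Rightarrow\log M_\beta$, and Slutsky applies again.
\item $\gamma>1$: $\{\mathcal D_x=0\}\supseteq\{R_x=1\}$, since either $N_x=1$ or $N_x=P^+(N_x)$. Theorem~\ref{thm:joint}(iii) gives $\Pp_x(R_x=1)\to1$.
\end{itemize}

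For \eqref{eq:ordinary-ratio-regimes}, use $P^+(N_x)/N_x=1/R_x$, which also holds at $N_x=1$ with $R_x=1$ because $P^+(1)=1$. The three cases then follow.
\begin{itemize}
\item $\gamma=1$ and $\gamma>1$: apply the continuous map $m\mapsto1/m$ to $R_x\Rightarrow M_\beta$ and to $R_x\to1$, respectively.
\item $0<\gamma<1$: it suffices that $R_x\xrightarrow{\Pp}\infty$. This follows from $a_x\log R_x\Rightarrow G$, because $G$ has no atom at $0$ and $a_x\to0$. Hence for fixed $K$, $\Pp_x(R_x\le K)\le\Pp_x(a_x\log R_x\le a_x\log K)\to0$.
\item $\gamma=0$: use $\log R_x/\log N_x=\mathcal D_x\Rightarrow1-S_1$. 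Since $S_1<1$ almost surely under $\PD(\theta)$, for any $\eta>0$ we get $\Pp_x(\mathcal D_x\le\eta)\to\Pp(1-S_1\le\eta)$, which is small for small $\eta$. On $\{\mathcal D_x>\eta\}$ we have $\log R_x>\eta\log N_x$, which tends to infinity with high probability. Thus $1/R_x\to0$ in probability.
\end{itemize}

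The only point requiring care is controlling $\log N_x$ from below and the negligible event $\{N_x=1\}$. Both come directly from the cutoff location limits, so no step beyond the earlier theorems should be hard.
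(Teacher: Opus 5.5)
Your proposal is correct and follows essentially the same route as the paper. You reduce everything to $\mathcal D_x=\log R_x/\log N_x$ on $\{N_x\ge2\}$, read off each regime from Theorems~\ref{thm:zero} and~\ref{thm:joint} by continuous mapping and Slutsky, and obtain $R_x\xrightarrow{\Pp}\infty$ for $0\le\gamma<1$ by the same two estimates. The only differences are cosmetic: you handle $\{N_x=1\}$ and $\log N_x/\log x\to1$ through the cutoff-location limits, whereas the paper cites $\Pp_x(N_x=1)=Z_x^{-1}\to0$ and the estimate \eqref{eq:log-N} from the proof of Corollary~\ref{cor:giant-prime}.
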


\subsection{The partition function}

\begin{theorem}[Partition function asymptotics]\label{thm:partition}
If $\gamma=0$, then
\begin{equation}\label{eq:Z-zero}
 Z_x
 \sim
 \frac{e^\beta}{\Gamma(\theta)}
 x(\log x)^{\theta-1}.
\end{equation}
If $\gamma>0$, then
\begin{equation}\label{eq:Z-unified}
 Z_x
 \sim
 \frac{\theta x\exp\{\beta(\log x)^\gamma\}}
 {(\log x)b_x}\,\zeta(1+a_x)^\theta.
\end{equation}
Equivalently, the three regimes with positive exponent are
\begin{equation}\label{eq:Z-regimes}
 Z_x\sim
 \begin{cases}
 \theta(\beta\gamma)^{-\theta}
 x e^{\beta(\log x)^\gamma}
 (\log x)^{\theta(1-\gamma)-1},&0<\gamma<1,\\[1mm]
 \dfrac{\theta}{1+\beta}\zeta(1+\beta)^\theta
 \dfrac{x^{1+\beta}}{\log x},&\gamma=1,\\[3mm]
 \dfrac{\theta}{\beta\gamma}
 x e^{\beta(\log x)^\gamma}(\log x)^{-\gamma},&\gamma>1.
 \end{cases}
\end{equation}
\end{theorem}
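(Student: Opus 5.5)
We treat the two cases separately, both starting from the exact largest prime decomposition. For $\gamma=0$ we have $Z_x=e^\beta\sum_{2\le n\le x}d_\theta(n)+1$, so \eqref{eq:Z-zero} is the Selberg--Delange asymptotic $\sum_{n\le x}d_\theta(n)\sim x(\log x)^{\theta-1}/\Gamma(\theta)$; this is classical and is also the normalization underlying \cite{ElboimGorodetsky2024}.

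For $\gamma>0$ we write each $n\ge2$ uniquely as $n=pm$ with $p=P^+(n)$ and $P^+(m)\le p$. Then
\[
 Z_x=1+\sum_{m\le x/2}d_\theta(m)\sum_{\substack{P^+(m)\le p\le x/m\\ p\nmid m\text{ or }\dots}}\frac{d_\theta(pm)}{d_\theta(m)}e^{V_\gamma(\log p)}.
\]
The ratio $d_\theta(pm)/d_\theta(m)$ equals $\theta$ when $p\nmid m$, and it is bounded when $p\mid m$. We restrict to $m\le x^{\epsilon_x}$ with $\epsilon_x\to0$ slowly. For such $m$, and for all but $o(1)$ of the $\QQ_{a_x}$-mass, the constraint $p\ge P^+(m)$ and the case $p\mid m$ are negligible, because the weighted prime sum is dominated by $p$ near $x/m$. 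Lemma~\ref{lem:weighted-pnt} then gives
\[
 \sum_{p\le x/m}e^{V_\gamma(\log p)}\sim\frac{(x/m)\,e^{V_\gamma(L-r)}}{(L-r)(1+V_\gamma'(L-r))},\qquad r=\log m .
\]
For $r=o(L)$ we have $(L-r)(1+V'_\gamma(L-r))\sim Lb_x$. Uniformity of this ratio is exactly what Lemma~\ref{lem:weighted-prime-ratios} supplies. Factoring out $\theta xe^{\beta L^\gamma}/(Lb_x)$, the remaining sum is
\[
 \sum_m d_\theta(m)m^{-1}\exp\{V_\gamma(L-r)-V_\gamma(L)\}.
\]
This is the same $L^1(\QQ_{a_x})$ comparison that drives Theorem~\ref{thm:cofactor}. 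The exponent equals $-a_xr+O(a_xr^2/L)$, so on the bulk of $\QQ_{a_x}$ the weight is $d_\theta(m)m^{-1-a_x}(1+o(1))$, and the bulk sum is $\zeta(1+a_x)^\theta(1+o(1))$.

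The bulk is where $r\lesssim K/a_x$ when $a_x\to0$, and where $m$ is bounded otherwise. There the error $a_xr^2/L$ is $O(K^2/(a_xL))=O(K^2L^{-\gamma})\to0$. This gives \eqref{eq:Z-unified}. Substituting $\zeta(1+a)^\theta\sim a^{-\theta}$ as $a\to0$, $a_x=\beta$, and $\zeta(1+a_x)\to1$ then yields the three lines of \eqref{eq:Z-regimes}. The $\gamma=1$ line uses $\exp\{\beta L\}=x^\beta$.

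The main obstacle is the tail, meaning the $m$ outside the bulk, up to $m\le x/2$. We must show its total contribution is $o\bigl(\zeta(1+a_x)^\theta\bigr)$ relative to the main term.

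\begin{itemize}
 \item \emph{Intermediate range} $K/a_x<r\le\delta L$. Concavity (for $\gamma\le1$) or convexity (for $\gamma>1$) of $V_\gamma$ gives $V_\gamma(L)-V_\gamma(L-r)\ge c\,a_xr$. Combined with the cruder bound $\sum_{p\le y}e^{V(\log p)}\ll ye^{V(\log y)}/\log y$, this dominates the sum by $\sum_m d_\theta(m)m^{-1-ca_x}\mathbf 1_{\{\log m>K/a_x\}}$. That sum is a small fraction of $\zeta(1+ca_x)^\theta\asymp\zeta(1+a_x)^\theta$ as $K\to\infty$, by the gamma-tail computation of Section~\ref{sec:euler} or by Rankin's trick.
 \item \emph{Macroscopic range} $r>\delta L$. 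The penalty $\exp\{-\beta(1-(1-\delta)^\gamma)L^\gamma\}$ beats the polynomial factor $\sum_{m\le x}d_\theta(m)/m\ll L^\theta$.
 \item \emph{Small primes.} The region where $x/m$ is small, so that Lemma~\ref{lem:weighted-pnt} is not yet in force, is handled by the trivial bound $\sum_{n\le x}d_\theta(n)\ll xL^{\theta-1}$. This is negligible against $xe^{\beta L^\gamma}L^{-1-\gamma}$ times the zeta factor.
\end{itemize}

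Choosing $K\to\infty$ and $\delta\to0$ slowly completes the proof. This tail control is identical to the one in the proof of Theorem~\ref{thm:cofactor}, so we would organize both results around a single lemma.
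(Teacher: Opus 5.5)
Your proposal is correct and is essentially the paper's argument. You use the largest-prime decomposition, Lemma~\ref{lem:weighted-pnt} at $y=x/m$, and a comparison of the cofactor weights $d_\theta(m)m^{-1}e^{V_\gamma(L-r)-V_\gamma(L)}$ with $d_\theta(m)m^{-1-a_x}$, with the tail controlled by the gamma/exponential-moment bound under $\QQ_{a_x}$ (or by $\zeta(b(L/2))^\theta-1\to0$ when $\gamma>1$), plus the trivial $D_\theta(x)$ bound for the sector where $P^+(n)$ is far below $x$.

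The paper streamlines your bookkeeping by cutting at $P^+(n)>\sqrt x$. This makes $Z_x^>=\theta\sum_{m<\sqrt x}d_\theta(m)\bigl(A(x/m)-A(\sqrt x)\bigr)$ exact, with no $p\mid m$ terms, and for $\gamma\le1$ concavity gives one uniform bound by a constant on all normalized cofactor weights with $m<\sqrt x$. Two small corrections: the uniformity you need comes from Lemma~\ref{lem:weighted-pnt} itself, not from Lemma~\ref{lem:weighted-prime-ratios}. Also, your crude prime-sum bound should keep the factor $b_y$ of \eqref{eq:weighted-pnt-upper}; without it you lose a factor $b_x\to\infty$ when $\gamma>1$, which happens to be absorbed by the exponentially small tail.
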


At $\theta=\beta=\gamma=1$, Theorem~\ref{thm:partition} recovers the
classical asymptotic of Alladi and Erd\H{o}s \cite{AlladiErdos1977},
\[
 \sum_{n\le x}P^+(n)
 \sim \frac{\pi^2}{12}\frac{x^2}{\log x}.
\]
The three lines of \eqref{eq:Z-regimes} follow from \eqref{eq:Z-unified}
and the behavior of $\zeta$ near $1$: $\zeta(1+a)\sim a^{-1}$ as
$a\downarrow0$, and $\zeta(1+a)\to1$ as $a\to\infty$.

The probability that $N_x$ is prime distinguishes the four regimes
quantitatively.

\begin{corollary}[Probability that $N_x$ is prime]\label{cor:prime-freezing}
For fixed $\theta,\beta>0$,
\[
 \Pp_x(N_x\text{ is prime})\longrightarrow
 \begin{cases}
  0,&0\le\gamma<1,\\
  \zeta(1+\beta)^{-\theta},&\gamma=1,\\
 1,&\gamma>1.
 \end{cases}
\]
More precisely,
\[
 \Pp_x(N_x\text{ is prime})
 \sim
 \begin{cases}
  \Gamma(\theta+1)(\log x)^{-\theta},&\gamma=0,\\[1mm]
  a_x^\theta=(\beta\gamma)^\theta(\log x)^{-\theta(1-\gamma)},&0<\gamma<1.
 \end{cases}
\]
\end{corollary}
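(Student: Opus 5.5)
The corollary concerns $\Pp_x(N_x\text{ prime})$, and the plan treats $\gamma>0$ and $\gamma=0$ separately.

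\emph{Case $\gamma>0$.} The event $\{N_x\text{ prime}\}$ is exactly $\{R_x=1\}$ intersected with $\{N_x\ge2\}$, and $\Pp_x(N_x=1)=1/Z_x\to0$. So it suffices to evaluate $\Pp_x(R_x=1)$.

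For the limits, Theorem~\ref{thm:cofactor} gives
\[
\Pp_x(R_x=1)=\QQ_{a_x}(1)+o(1)=\zeta(1+a_x)^{-\theta}+o(1).
\]
This yields the three values: $0$ when $a_x\to0$, using $\zeta(1+a)\sim a^{-1}$; $\zeta(1+\beta)^{-\theta}$ at $\gamma=1$; and $1$ when $a_x\to\infty$.

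For $0<\gamma<1$ the total variation statement gives only an additive $o(1)$ error, while the target $a_x^\theta$ itself tends to $0$. A relative asymptotic is needed, so I compute directly:
\[
\Pp_x(N_x\text{ prime})=\frac{1}{Z_x}\sum_{p\le x}\theta e^{\beta(\log p)^\gamma}.
\]
Lemma~\ref{lem:weighted-pnt} supplies the Laplace approximation
\[
\sum_{p\le x}e^{\beta(\log p)^\gamma}\sim\frac{x e^{\beta L^\gamma}}{L\,b_x}.
\]
Dividing by \eqref{eq:Z-unified} leaves exactly $\zeta(1+a_x)^{-\theta}\sim a_x^\theta$. The same ratio argument reproves the limits at $\gamma\ge1$ without invoking Theorem~\ref{thm:cofactor}. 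The one point to check is that Lemma~\ref{lem:weighted-pnt} is stated with relative error $o(1)$ uniformly for the field $V_\gamma$, and that $b_x\to1$ here.

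\emph{Case $\gamma=0$.} The weight is $e^\beta$ on every $n\ge2$, so
\[
\Pp_x(N_x\text{ prime})=\frac{e^\beta\theta\,\pi(x)}{Z_x}.
\]
By the prime number theorem $\pi(x)\sim x/L$, and by \eqref{eq:Z-zero}
\[
\Pp_x(N_x\text{ prime})\sim\frac{\theta\,\Gamma(\theta)\,x/L}{x L^{\theta-1}}=\Gamma(\theta+1)L^{-\theta}\to0.
\]

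The main obstacle is the relative asymptotic for $0<\gamma<1$. It rests entirely on the precise constants in Lemma~\ref{lem:weighted-pnt} and Theorem~\ref{thm:partition}, taken as given, so the remaining work is only the bookkeeping of the factors $\theta$, $b_x$ and $\zeta(1+a_x)^\theta$.
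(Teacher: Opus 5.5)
Your proof is correct and takes essentially the same route as the paper. You write $\Pp_x(N_x\text{ is prime})=\theta A(x)/Z_x$ for $\gamma>0$ and $e^\beta\theta\pi(x)/Z_x$ for $\gamma=0$, then divide Lemma~\ref{lem:weighted-pnt} by \eqref{eq:Z-unified} (respectively the prime number theorem by \eqref{eq:Z-zero}) to obtain $\zeta(1+a_x)^{-\theta}$, and the paper's treatment of $\gamma>1$ via \eqref{eq:super-Z-prime} is the same ratio computation; note that $b_x$ cancels in the ratio, so you never need $b_x\to1$.
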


\section{The cofactor law and the partition function}\label{sec:arithmetic}

This section proves Theorems~\ref{thm:cofactor}
and~\ref{thm:partition}.  The reward is not multiplicative, but decomposing
at a largest prime greater than $\sqrt x$ separates that prime from its
cofactor.  A weighted prime number theorem estimate and an $L^1(\QQ_a)$
comparison then identify both the normalized cofactor weights and the total
mass.  The final subsection combines these ingredients and treats
$\gamma=0$ separately.  Throughout the analysis
for $\gamma>0$, we write $L=\log x$ and abbreviate
\[
 a=a_x=\beta\gamma L^{\gamma-1},
 \qquad
 1+a=b_x,
 \qquad
 C_x=\frac{x e^{\beta L^\gamma}}{L(1+a)}.
\]

\subsection{Partition function decomposition}

Let $Z_x^{>}$ and $Z_x^{\le}$ be the contributions to
\eqref{eq:partition} from the integers with $P^+(n)>\sqrt x$ and with
$P^+(n)\le\sqrt x$, respectively.  The first part admits an exact
decomposition and supplies the main term in \eqref{eq:Z-unified}; the
complementary part is negligible on the same scale.  The cutoff at
$\sqrt x$ also ensures that the largest prime occurs exactly once.  The
weighted prime sum that appears in the decomposition is
\[
 A(y)=\sum_{p\le y}\exp\{\beta(\log p)^\gamma\},
 \qquad y>0,
\]
so that $A(y)=0$ for $y<2$.  We shall also use the following standard
summatory estimate.

\begin{lemma}[Summatory estimate for $d_\theta$]\label{lem:dtheta-sum}
As $y\to\infty$,
\begin{equation}\label{eq:selberg-delange}
 D_\theta(y):=\sum_{n\le y}d_\theta(n)
 \sim \frac{y(\log y)^{\theta-1}}{\Gamma(\theta)}.
\end{equation}
There is a constant $C$ such that
\[
 D_\theta(y)\le Cy(\log y)^{\theta-1},\qquad y\ge2,
\]
and, for every fixed $u\in(0,1]$,
\[
 \frac{D_\theta(uy)}{D_\theta(y)}\longrightarrow u.
\]
\end{lemma}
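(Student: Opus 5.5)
The plan is to deduce Lemma~\ref{lem:dtheta-sum} from the Selberg--Delange method, or, to stay self-contained, from a Tauberian argument applied to the Dirichlet series \eqref{eq:dtheta-euler}.

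\emph{The asymptotic.} Write $\zeta(s)^\theta=(s-1)^{-\theta}G(s)$. Here $G(s)=((s-1)\zeta(s))^\theta$ is holomorphic and zero-free in a neighbourhood of the closed half-plane $\operatorname{Re}s\ge1$, with $G(1)=1$; this uses the nonvanishing of $\zeta$ on the line $\operatorname{Re}s=1$ and the standard zero-free region. The Selberg--Delange theorem (Tenenbaum, Ch.~II.5) then gives
\[
D_\theta(y)=\frac{y(\log y)^{\theta-1}}{\Gamma(\theta)}\bigl(1+O(1/\log y)\bigr).
\]
This is \eqref{eq:selberg-delange}. If one prefers to avoid contour integration, note that $d_\theta\ge0$ and that, as $s\downarrow1$,
\[
\sum_n d_\theta(n)n^{-s}\sim(s-1)^{-\theta}.
\]
Karamata's Tauberian theorem, applied to the measure $\sum_n d_\theta(n)\delta_{\log n}$ with Laplace variable $s-1$, gives
\[
\sum_{n\le y}d_\theta(n)/n\sim(\log y)^\theta/\Gamma(\theta+1).
\]
To pass from this logarithmic average to the unweighted sum one needs an extra ingredient, such as the convolution identity $d_\theta\log=d_\theta*\Lambda_\theta$ with $\Lambda_\theta=\theta\Lambda$, combined with the prime number theorem. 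This is the Wirsing/Hall--Tenenbaum route. It yields
\[
D_\theta(y)\log y\sim\theta\sum_{m\le y}d_\theta(m)\,\frac{y}{m\log(y/m)}\cdot(\ldots),
\]
and that relation bootstraps to \eqref{eq:selberg-delange}. I expect this de-smoothing step to be the only real obstacle, and I would simply cite Selberg--Delange or Wirsing's theorem rather than reprove it.

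\emph{The uniform bound.} For large $y$ the bound follows directly from the asymptotic. For $y$ in any compact range $[2,y_0]$, $D_\theta(y)$ is bounded while $y(\log y)^{\theta-1}$ is bounded below by a positive constant, so a single constant $C$ works for all $y\ge2$. A direct alternative is the Hardy--Ramanujan/Shiu-type bound obtained from $\log n=\sum_{p^k\|n}\log p^k$. It gives
\[
D_\theta(y)\log y\ll\sum_{m\le y}d_\theta(m)\,\frac{y}{m}+D_\theta(y)\cdot O(1)+y\sum_{m\le y}d_\theta(m)/m,
\]
and then $\sum_{m\le y}d_\theta(m)/m\le\prod_{p\le y}(1-1/p)^{-\theta}\ll(\log y)^\theta$ by Mertens' theorem.

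\emph{The ratio limit.} Fix $u\in(0,1]$ and apply \eqref{eq:selberg-delange} at both $uy$ and $y$. Since $\log(uy)/\log y\to1$, we get $(\log uy)^{\theta-1}/(\log y)^{\theta-1}\to1$, and the ratio $D_\theta(uy)/D_\theta(y)$ therefore tends to $u$.
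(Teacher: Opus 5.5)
Your main route is the same as the paper's: cite Selberg--Delange (the paper also cites Elboim--Gorodetsky, Corollary~3.2) for the asymptotic, then get the uniform bound by enlarging the constant on a compact range of $y$, and the ratio limit by applying the asymptotic at $uy$ and at $y$. The Tauberian and Shiu-type alternatives are not needed; if you keep them, two slips need fixing: Karamata's theorem should be applied to $\sum_n d_\theta(n)n^{-1}\delta_{\log n}$ (not $\sum_n d_\theta(n)\delta_{\log n}$), and the de-smoothing identity is $\sum_{n\le y}d_\theta(n)\log n=\theta\sum_{m\le y}d_\theta(m)\psi(y/m)$.
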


\begin{proof}
The asymptotic \eqref{eq:selberg-delange} is the case
$\alpha=d_\theta$ and $d=0$ of Elboim and Gorodetsky
\cite[Corollary~3.2]{ElboimGorodetsky2024}; it also follows directly from
the Selberg--Delange method, as in Tenenbaum
\cite{Tenenbaum2015}.  Enlarging the constant in the
asymptotic estimate gives the upper bound.  Finally,
\[
 \frac{D_\theta(uy)}{D_\theta(y)}
 \sim u\left(\frac{\log(uy)}{\log y}\right)^{\theta-1}
 \longrightarrow u.
\]
\end{proof}

\begin{lemma}[Cofactor decomposition and the small sector]
\label{lem:exact-decomp}
Suppose $\gamma>0$.  Then
\begin{equation}\label{eq:exact-decomp}
 Z_x^{>}
 =
 \theta\sum_{m<\sqrt x}d_\theta(m)
 \bigl(A(x/m)-A(\sqrt x)\bigr).
\end{equation}
Moreover,
\begin{equation}\label{eq:small-sector}
 Z_x^{\le}
 =
 o\left(
 \frac{x\exp\{\beta(\log x)^\gamma\}}
 {(\log x)b_x}\,\zeta(1+a_x)^\theta
 \right).
\end{equation}
\end{lemma}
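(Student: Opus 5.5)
The plan is to prove the identity \eqref{eq:exact-decomp} by direct bijective counting, and to prove \eqref{eq:small-sector} by a crude bound in which the reward is evaluated at its largest allowed value $\sqrt x$; no delicate estimate is needed.

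\emph{Exact decomposition.}
\begin{enumerate}[label=\textup{(\alph*)},leftmargin=9mm]
\item Take $n\le x$ with $p=P^+(n)>\sqrt x$ and write $n=pm$. Then $m=n/p\le x/p<\sqrt x<p$. Hence $p\nmid m$, so $p$ divides $n$ exactly once.
\item By multiplicativity and \eqref{eq:dtheta-local}, $d_\theta(n)=d_\theta(p)\,d_\theta(m)=\theta\,d_\theta(m)$. The reward is $\exp\{\beta(\log p)^\gamma\}$.
\item Conversely, fix $m<\sqrt x$ and a prime $p$ with $\sqrt x<p\le x/m$. Every prime factor of $m$ is at most $m<\sqrt x<p$, so $P^+(pm)=p$. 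Also $pm\le x$.
\item Hence $n\mapsto(m,p)$ is a bijection between the integers counted in $Z_x^{>}$ and the pairs with $m<\sqrt x$ and $\sqrt x<p\le x/m$.
\item Summing over $p$ for fixed $m$ gives $A(x/m)-A(\sqrt x)$. Note that $x/m>\sqrt x$ exactly when $m<\sqrt x$, so the range of $m$ is the stated one and each difference is nonnegative.
\end{enumerate}
This yields \eqref{eq:exact-decomp}.

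\emph{Small sector.}
\begin{enumerate}[label=\textup{(\alph*)},leftmargin=9mm,resume]
\item Every $n\le x$ with $P^+(n)\le\sqrt x$, including $n=1$, has $H_\gamma(n)\le(\tfrac12 L)^\gamma$. Therefore, by Lemma~\ref{lem:dtheta-sum},
\[
 Z_x^{\le}\le e^{\beta2^{-\gamma}L^\gamma}D_\theta(x)\le Cx L^{\theta-1}e^{\beta2^{-\gamma}L^\gamma}.
\]
\item Since $a_x>0$, the Euler product gives $\zeta(1+a_x)^\theta\ge1$. Dividing the bound by the target scale $C_x\zeta(1+a_x)^\theta$ leaves at most
\[
 C\,L^{\theta}\,b_x\,\exp\{-\beta(1-2^{-\gamma})L^\gamma\}.
\]
\item Here $b_x=1+\beta\gamma L^{\gamma-1}$ grows at most polynomially in $L$. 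The exponent $\beta(1-2^{-\gamma})L^\gamma$ has a positive coefficient for every $\gamma>0$. So the exponential decay beats the polynomial factor, and the ratio tends to zero.
\end{enumerate}

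The only point requiring care is this last comparison for small $\gamma$. There the saving $\exp\{-cL^\gamma\}$ is subexponential in $L$, but it is still superpolynomial in $L$, so it dominates $L^{\theta}b_x$ uniformly in the regime. I expect no genuine obstacle: the lower bound $\zeta(1+a_x)\ge1$ is lossy but suffices, because the gap between the rewards at $\sqrt x$ and at $x$ is already exponential in $L^\gamma$.
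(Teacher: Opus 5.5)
Your proposal is correct and matches the paper's proof essentially step for step. You use the same bijection $n\mapsto(n/P^+(n),P^+(n))$, with $m\le x/p<\sqrt x<p$ giving coprimality and hence $d_\theta(pm)=\theta\,d_\theta(m)$, and the same crude bound $Z_x^{\le}\le e^{\beta2^{-\gamma}L^\gamma}D_\theta(x)$, which together with $\zeta(1+a_x)\ge1$ leaves a ratio of at most $C L^{\theta}b_x e^{-\beta(1-2^{-\gamma})L^\gamma}\to0$.
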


\begin{proof}
We first compute $Z_x^>$.  For every integer $n$ counted by $Z_x^>$, set
\[
 p=P^+(n),
 \qquad
 m=n/p.
\]
Since $p>\sqrt x$, we have $m=n/p\le x/p<\sqrt x<p$, and hence
$(m,p)=1$.  Therefore
\[
 d_\theta(pm)=d_\theta(p)d_\theta(m)
 =\theta d_\theta(m).
\]
Conversely, if $m<\sqrt x$ and $p$ is a prime with $\sqrt x<p\le x/m$,
then $n=mp\le x$, and every prime $q\mid m$ satisfies
$q\le m<\sqrt x<p$, so $P^+(n)=p$.  This gives a bijection between the
integers counted by $Z_x^>$ and these pairs $(m,p)$.  Using the definition
of $A$,
\begin{align*}
 Z_x^>
 &=\sum_{m<\sqrt x}\
   \sum_{\sqrt x<p\le x/m}
   d_\theta(mp)e^{\beta(\log p)^\gamma}\\
 &=\theta\sum_{m<\sqrt x}d_\theta(m)
   \sum_{\sqrt x<p\le x/m}e^{\beta(\log p)^\gamma}
 =\theta\sum_{m<\sqrt x}d_\theta(m)
   \bigl(A(x/m)-A(\sqrt x)\bigr),
\end{align*}
which proves \eqref{eq:exact-decomp}.

We next bound $Z_x^\le$.  If $P^+(n)\le\sqrt x$, then
$\log P^+(n)\le L/2$ and
$e^{\beta(\log P^+(n))^\gamma}\le e^{\beta2^{-\gamma}L^\gamma}$.  The
upper bound in Lemma~\ref{lem:dtheta-sum} now gives
\[
 Z_x^{\le}
 \le
 e^{\beta2^{-\gamma}L^\gamma}D_\theta(x)
 \le
 CxL^{\theta-1}e^{\beta2^{-\gamma}L^\gamma}.
\]
Put $c_\gamma=\beta(1-2^{-\gamma})>0$.  Then
\[
 \frac{Z_x^\le}{C_x\zeta(1+a)^\theta}
 \le
 C L^\theta\frac{1+a}{\zeta(1+a)^\theta}
 e^{-c_\gamma L^\gamma}.
\]
Since $\zeta(1+a)\ge1$ and
$1+a=O(L^{\max\{\gamma-1,0\}})$,
\[
 \frac{Z_x^\le}{C_x\zeta(1+a)^\theta}
 \le C L^{\theta+\max\{\gamma-1,0\}}e^{-c_\gamma L^\gamma}
 \longrightarrow0.
\]
This is \eqref{eq:small-sector}.
\end{proof}

\subsection{Estimates for the weighted prime sum
\texorpdfstring{$A$}{A}}

By \eqref{eq:exact-decomp}, it remains to estimate $A(x/m)$.  We first
obtain an asymptotic formula for $A$ and then derive two uniform ratio
estimates.

For $s>0$, put
\[
 F(s)=s+\beta s^\gamma,
 \qquad
 b(s)=F'(s)=1+\beta\gamma s^{\gamma-1},
\]
and, for $y>1$, write $b_y=b(\log y)$.
In particular, $b_x=b(\log x)$.

\begin{lemma}[Asymptotic estimate for the weighted prime sum]
\label{lem:weighted-pnt}
Let $\gamma>0$.  Then, as $y\to\infty$,
\begin{equation}\label{eq:weighted-pnt}
 A(y)
 =\frac{y\exp\{\beta(\log y)^\gamma\}}
 {(\log y)b_y}
 \left(1+O\left(\frac1{(\log y)b_y}\right)\right).
\end{equation}
In particular, there is a constant $C$ such that
\begin{equation}\label{eq:weighted-pnt-upper}
 A(y)\le C\frac{y\exp\{\beta(\log y)^\gamma\}}
 {(\log y)b_y},
 \qquad y\ge e.
\end{equation}
\end{lemma}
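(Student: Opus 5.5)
Write $A(y)=\int_{2^-}^{y}e^{\beta(\log t)^\gamma}\,d\pi(t)$ and compare with the smooth integral
\[
 I(y)=\int_2^y \frac{e^{\beta(\log t)^\gamma}}{\log t}\,dt
 =\int_{\log 2}^{\log y}\frac{e^{F(s)}}{s}\,ds ,
\]
obtained by the substitution $t=e^s$. The proof then has two parts. First show that $A(y)-I(y)$ is negligible against the claimed error. Then evaluate $I(y)$ by Laplace's method near the endpoint $s=\log y$.

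\textbf{Step 1: from $A$ to $I$.} Use the prime number theorem with classical error term, $\pi(t)=\operatorname{li}(t)+E(t)$ with $E(t)\ll t\exp(-c\sqrt{\log t})$, and integrate by parts:
\[
 A(y)-I(y)=E(y)e^{\beta(\log y)^\gamma}-\int_2^yE(t)\,d\bigl(e^{\beta(\log t)^\gamma}\bigr)+O(1).
\]
The derivative of the weight is $e^{\beta(\log t)^\gamma}\beta\gamma(\log t)^{\gamma-1}/t$. So the integral term is bounded by
\[
 \int_2^y e^{\beta(\log t)^\gamma}\exp(-c\sqrt{\log t})\,\beta\gamma(\log t)^{\gamma-1}\,dt ,
\]
and this integral is handled as in Step 2. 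The whole difference is then bounded by $y e^{\beta(\log y)^\gamma}\exp(-c'\sqrt{\log y})$ times a factor of size at most a power of $\log y$ (including $b_y$).

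This is where $\gamma$ matters. For $\gamma<1/2$ (indeed for $\gamma<1$) the PNT error beats any power of $\log y$, and the target error $y e^{\beta(\log y)^\gamma}/((\log y)^2b_y^2)$ is only a polylog smaller than the main term. For $\gamma\ge1$, however, $b_y\asymp(\log y)^{\gamma-1}$ grows polynomially, and $\exp(-c\sqrt{\log y})$ is still superpolynomially small. So in every case the PNT remainder is $o$ of the target error.

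\textbf{Step 2: Laplace evaluation of $I$.} This is the main computation. Integrate by parts using $d(e^{F(s)})=b(s)e^{F(s)}\,ds$:
\[
 I(y)=\Bigl[\frac{e^{F(s)}}{s\,b(s)}\Bigr]_{\log 2}^{\log y}+\int_{\log2}^{\log y}e^{F(s)}\,\frac{(s b(s))'}{(s b(s))^2}\,ds .
\]
The boundary term at $\log y$ is exactly the main term $ye^{\beta(\log y)^\gamma}/((\log y)b_y)$.

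Next, $(sb(s))'=1+\beta\gamma^2s^{\gamma-1}\le C\,b(s)$. Hence the remaining integral is at most
\[
 C\int e^{F(s)}\,\frac{b(s)}{s^2b(s)^2}\,ds .
\]
A second integration by parts of the same kind, using that $1/(s^2b(s)^2)$ is slowly varying relative to $e^{F}$, bounds this by
\[
 \frac{Ce^{F(\log y)}}{(\log y)^2b_y^2}+O(1).
\]
This gives the relative error $O(1/((\log y)b_y))$.

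The delicate point is to do this uniformly in all regimes of $\gamma$, with either $b\to1$ or $b\to\infty$. I would split the range at $s=\log y/2$. On the lower half, the trivial bound $e^{F(\log y/2)}$ is smaller than the main term by a factor $y^{-1/2}$, so that part is negligible. On the upper half, $s\asymp\log y$ and $b(s)\asymp b_y$, so the integration by parts bounds are clean.

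\textbf{Step 3: the upper bound.} The inequality \eqref{eq:weighted-pnt-upper} for $y\ge e$ follows from \eqref{eq:weighted-pnt} for large $y$. For $y$ in the remaining compact range it holds by adjusting the constant, since there $A(y)$ is bounded and the right side is bounded below.
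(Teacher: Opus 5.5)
Your proposal is correct and is essentially the paper's proof. The shared steps are the prime number theorem with the classical error term and Stieltjes integration by parts, the substitution $t=e^s$, and one integration by parts with $d(e^{F})=b\,e^{F}\,ds$. It also splits at $s=(\log y)/2$ and treats the prime number theorem remainders as $o$ of the target error because $(\log y)b_y$ is only polynomial in $\log y$.

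The differences are minor:
\begin{itemize}
\item On the upper half you bound the remainder directly from $s\asymp\log y$ and $b(s)\asymp b_y$, giving $\frac{C}{((\log y)b_y)^2}\int e^{F}b\,ds\le \frac{Ce^{F(\log y)}}{((\log y)b_y)^2}$. The paper instead compares that remainder with $I(y)$ itself and absorbs it.
\item The ``$+O(1)$'' after your second integration by parts should read $O(e^{F((\log y)/2)})$, coming from the lower half. This is harmless, since $e^{F((\log y)/2)}\le y^{-1/2}e^{F(\log y)}$.
\end{itemize}
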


\begin{proof}
\smallskip
\noindent\emph{Step 1. The prime number theorem and Stieltjes
integration.}
Set
\[
 w(t)=e^{\beta(\log t)^\gamma},
 \qquad
 w'(t)=\frac{\beta\gamma(\log t)^{\gamma-1}}{t}w(t),
 \qquad t\ge2.
\]
Let
\[
 \pi(t)=\#\{p:p\le t\},
 \qquad
 \Li(t)=\int_2^t\frac{du}{\log u},
 \qquad
 \mathcal E(t)=\pi(t)-\Li(t),
 \qquad t\ge2,
\]
and extend all three functions by zero on $(0,2)$.  The prime number
theorem with the error term coming from the classical zero free region
gives
\[
 \mathcal E(t)=O\bigl(te^{-c\sqrt{\log t}}\bigr)
\]
for some $c>0$; see Montgomery and Vaughan
\cite{MontgomeryVaughan2006}.  Stieltjes integration by parts
and $d\Li(t)=dt/\log t$ give
\begin{equation}\label{eq:A-stieltjes}
 A(y)=\int_{2^-}^{y}w(t)\,d\pi(t)
 =\int_2^{y}\frac{w(t)}{\log t}\,dt
 +w(y)\mathcal E(y)-\int_2^{y}\mathcal E(t)w'(t)\,dt.
\end{equation}
Thus the prime number theorem separates $A(y)$ into a continuous main
integral and two error terms, which will be estimated in Step~3.
\par\smallskip
\noindent\emph{Step 2. The logarithmic change of variables and
the Laplace main term.}
The substitution $t=e^s$ gives
\[
 \int_2^y\frac{w(t)}{\log t}\,dt
 =\int_{\log2}^{\log y}\frac{e^{F(s)}}s\,ds.
\]
Since $F'(\log y)=b(\log y)$, the exponent changes by an amount of order
one when $\log y-s$ is of order $1/b(\log y)$.  The integral is therefore
concentrated in a layer of this width, while $1/s$ is nearly constant
there.  Laplace's method consequently suggests
\[
 \int_{\log2}^{\log y}\frac{e^{F(s)}}s\,ds
 \sim\frac{e^{F(\log y)}}{(\log y)b(\log y)}.
\]
We prove this prediction directly.  Since
$d(e^{F(s)})=b(s)e^{F(s)}\,ds$, integration by parts gives
\begin{align*}
 \int_{\log2}^{\log y}\frac{e^{F(s)}}s\,ds
 &=\frac{e^{F(\log y)}}{(\log y)b(\log y)}
 -\frac{e^{F(\log2)}}{(\log2)b(\log2)}\\
 &\quad+
 \int_{\log2}^{\log y}
 \frac{1+\beta\gamma^2s^{\gamma-1}}{s^2b(s)^2}e^{F(s)}\,ds.
\end{align*}
The first term is the predicted Laplace term.  Step~3 bounds the
constant term and the remaining integral.

\smallskip
\noindent\emph{Step 3. Remainder bounds.}
We first show that the integral remainder in Step~2 is smaller than the
Laplace term.  We then control the two prime number theorem errors in
\eqref{eq:A-stieltjes}.  The coefficient of $e^{F(s)}$ in the integral
remainder satisfies
\[
 -\left(\frac1{sb(s)}\right)'
 =\frac{1+\beta\gamma^2s^{\gamma-1}}{s^2b(s)^2}>0,
\]
while
\[
 1+\beta\gamma^2s^{\gamma-1}
 \le\max\{1,\gamma\}b(s).
\]
For all sufficiently large $y$, split the last integral in the
integration by parts formula at $(\log y)/2$.  On the upper part,
$sb(s)=s+\beta\gamma s^\gamma$ is comparable with
$(\log y)b(\log y)$, and hence
\[
 0\le
 \frac{1+\beta\gamma^2s^{\gamma-1}}{s^2b(s)^2}
 \le\frac{C}{(\log y)b(\log y)}\frac1s.
\]
On the lower part, the identity for the derivative of $1/(sb(s))$ and
the monotonicity of $F$ give
\[
 \int_{\log2}^{(\log y)/2}
 \frac{1+\beta\gamma^2s^{\gamma-1}}{s^2b(s)^2}e^{F(s)}\,ds
 \le\frac{e^{F((\log y)/2)}}{(\log2)b(\log2)}.
\]
Since
$F(\log y)-F((\log y)/2)\ge(\log y)/2$ and
$(\log y)b(\log y)$ grows polynomially in $\log y$, the preceding
integration by parts identity gives
\[
 \int_{\log2}^{\log y}\frac{e^{F(s)}}s\,ds
 \le \frac{e^{F(\log y)}}{(\log y)b(\log y)}
 +o\left(\frac{e^{F(\log y)}}{(\log y)b(\log y)}\right)
 +\frac{C}{(\log y)b(\log y)}
 \int_{\log2}^{\log y}\frac{e^{F(s)}}s\,ds.
\]
The coefficient of the last integral is smaller than $1/2$ for large
$y$.  Moving that term to the left shows that
\[
 \int_{\log2}^{\log y}\frac{e^{F(s)}}s\,ds
 =O\left(\frac{e^{F(\log y)}}{(\log y)b(\log y)}\right).
\]
The bounded lower limit term and the part of the remainder below
$(\log y)/2$ are exponentially smaller than the Laplace term.  On the
upper part, substituting the preceding bound into the remainder estimate
gives
\[
 \int_{\log2}^{\log y}\frac{e^{F(s)}}s\,ds
 =\frac{e^{F(\log y)}}{(\log y)b(\log y)}
 \left(1+O\left(\frac1{(\log y)b(\log y)}\right)\right).
\]
We now return to the two error terms in \eqref{eq:A-stieltjes}.  For all
sufficiently large $s$,
\[
 \left(F(s)-c\sqrt s\right)'\ge\frac12b(s),
 \qquad
 s^{\gamma-1}\le C_{\beta,\gamma}b(s).
\]
Choose $s_0$ so that these bounds hold.  After substituting $t=e^s$ in
the last error term in \eqref{eq:A-stieltjes}, we obtain, for some
$c_1>0$,
\begin{align*}
 |w(y)\mathcal E(y)|
 &\le C e^{F(\log y)-c\sqrt{\log y}},\\
 \left|\int_2^{y}\mathcal E(t)w'(t)\,dt\right|
 &\le C+C\int_{s_0}^{\log y}
 e^{F(s)-c\sqrt s}\left(F(s)-c\sqrt s\right)'\,ds
 \le C e^{F(\log y)-c_1\sqrt{\log y}}.
\end{align*}
Both errors are smaller than the leading term times the relative error
displayed above, since
\[
 \frac{e^{F(\log y)-c_1\sqrt{\log y}}}
 {e^{F(\log y)}/\{(\log y)b(\log y)\}}
 =(\log y)b(\log y)e^{-c_1\sqrt{\log y}}
 =o\left(\frac1{(\log y)b(\log y)}\right).
\]
Consequently,
\[
 A(y)=
 \frac{y\exp\{\beta(\log y)^\gamma\}}
 {(\log y)\{1+\beta\gamma(\log y)^{\gamma-1}\}}
 \left(1+O\left(
 \frac1{(\log y)\{1+\beta\gamma(\log y)^{\gamma-1}\}}
 \right)\right),
\]
which proves \eqref{eq:weighted-pnt}.  The upper bound
\eqref{eq:weighted-pnt-upper} follows for large $y$ from this estimate;
enlarging the constant covers the remaining compact interval.

The relative error above is uniformly small for $y\ge Y$ as
$Y\to\infty$.  In the applications below, $y=x/m$ with $m<\sqrt x$,
so $y>\sqrt x$ and the estimate is uniform in $m$.
\end{proof}

\begin{lemma}[Uniform ratio estimates for $A$]
\label{lem:weighted-prime-ratios}
Let $\gamma>0$.  For every fixed $0<T<\infty$,
\begin{equation}\label{eq:prime-boundary-ratio}
 \sup_{0\le t\le T}
 \left|
 \frac{A\bigl(ye^{-t/b_y}\bigr)}{A(y)}-e^{-t}
 \right|\longrightarrow0,
 \qquad y\to\infty.
\end{equation}
If $0<\gamma\le1$, then for every compact $K\subset(0,1]$,
\begin{equation}\label{eq:prime-ratio-u}
 \frac{A(uy)}{A(y)}\longrightarrow
 \begin{cases}
  u,&0<\gamma<1,\\
  u^{1+\beta},&\gamma=1,
 \end{cases}
 \qquad\text{uniformly for }u\in K,
\end{equation}
\end{lemma}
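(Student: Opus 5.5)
Both limits will follow from the asymptotic formula \eqref{eq:weighted-pnt} of Lemma~\ref{lem:weighted-pnt}. Its relative error is uniform in the argument once the argument exceeds a large threshold. Write $A(y)=g(y)(1+\epsilon(y))$, where
\[
 g(y)=\frac{y\exp\{\beta(\log y)^\gamma\}}{(\log y)b_y},
 \qquad
 \sup_{z\ge Y}|\epsilon(z)|\to0 \text{ as } Y\to\infty .
\]
In both statements the arguments $ye^{-t/b_y}$ and $uy$ are at least $ye^{-T}$ or $(\min K)y$, so they tend to infinity uniformly in $t$ or $u$. Hence
\[
 \frac{A(z)}{A(y)}=\frac{g(z)}{g(y)}\,(1+o(1))
\]
uniformly, and it suffices to compute the ratios of $g$. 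These are bounded in the relevant ranges, so the $o(1)$ factor contributes only a uniformly small additive error.

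\emph{Boundary ratio.} Set $s=\log y$ and $h=t/b(s)$ with $0\le t\le T$. Note $b(s)\ge1$, so $h\le T$; and $h/s\to0$ uniformly. Then
\[
 \frac{g(ye^{-h})}{g(y)}=\exp\{F(s-h)-F(s)\}\cdot\frac{s\,b(s)}{(s-h)\,b(s-h)} .
\]
For the exponent, Taylor's formula gives $F(s-h)-F(s)=-hF'(s)+\tfrac12h^2F''(\xi)$ with $\xi\in[s-h,s]$, and $hF'(s)=t$. Also $F''(\xi)=\beta\gamma(\gamma-1)\xi^{\gamma-2}$, so $h^2|F''(\xi)|\le CT^2 s^{\gamma-2}/b(s)^2$. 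Split by $\gamma$:
\begin{itemize}
\item if $\gamma\le1$, this is $O(s^{\gamma-2})\to0$;
\item if $\gamma>1$, then $b(s)\asymp s^{\gamma-1}$, so the bound is $O(s^{-\gamma})\to0$.
\end{itemize}
For the prefactor, $s/(s-h)\to1$ uniformly. Moreover $b(s-h)/b(s)\to1$ uniformly, because $s^{\gamma-1}$ is regularly varying and $h=O(1)=o(s)$. Therefore the ratio equals $e^{-t}(1+o(1))$ uniformly in $t\in[0,T]$, which gives \eqref{eq:prime-boundary-ratio}.

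\emph{Multiplicative ratio for $\gamma\le1$.} Take $u\in K$ and put $h=-\log u$, which is bounded on $K$. The same formula gives
\[
 \frac{g(uy)}{g(y)}=u\cdot\exp\{\beta[(s-h)^\gamma-s^\gamma]\}\cdot\frac{s\,b(s)}{(s-h)\,b(s-h)} .
\]
By the mean value theorem, $(s-h)^\gamma-s^\gamma=-\gamma h\,\xi^{\gamma-1}$ for some $\xi\in[s-h,s]$.
\begin{itemize}
\item If $\gamma<1$, this tends to $0$ uniformly, so the exponential factor tends to $1$.
\item If $\gamma=1$, it equals $-h$ exactly, so the exponential factor is $u^{\beta}$.
\end{itemize}
In both cases the prefactor tends to $1$ uniformly, since $b$ is either constant ($\gamma=1$) or tends to $1$ ($\gamma<1$). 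This gives $u$ or $u^{1+\beta}$ uniformly on $K$, which is \eqref{eq:prime-ratio-u}.

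\emph{Main obstacle.} The only delicate point is uniformity in the boundary ratio when $\gamma>1$. There $b(s)\to\infty$, so one must check that the second-order Taylor term is $O(s^{-\gamma})$, which it is. One must also check that the error from Lemma~\ref{lem:weighted-pnt} is uniform over arguments tending to infinity. The lemma's closing remark already gives this.
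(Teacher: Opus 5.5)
Your proposal is correct and follows essentially the same route as the paper. You reduce both limits to ratios of the main term in \eqref{eq:weighted-pnt}, using the uniformity of its relative error, and you control the exponent by a second-order estimate. Your Lagrange remainder $\tfrac12h^2F''(\xi)$ plays the role of the paper's integral $\int\{b(\log y)-b(v)\}\,dv$ bounded via $b'$, and your case split on $\gamma$ matches the paper's use of $b\ge\max\{1,\beta\gamma(\log y)^{\gamma-1}\}$. You then use the mean value theorem for the multiplicative ratio, exactly as the paper does.
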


\begin{proof}
For $0\le t\le T$, set $\rho=t/b(\log y)$, so that $0\le\rho\le T$.  Since
$b'(v)=\beta\gamma(\gamma-1)v^{\gamma-2}$, we have
$\sup_{\log y-\rho\le v\le\log y}|b'(v)|
\le C(\log y)^{\gamma-2}$ once $\log y\ge2T$.  Hence
\[
 |F(\log y-\rho)-F(\log y)+t|
 =\left|\int_{\log y-\rho}^{\log y}
 \{b(\log y)-b(v)\}\,dv\right|
 \le C_T\frac{(\log y)^{\gamma-2}}{b(\log y)^2}=o(1),
\]
uniformly in $t$; the last step uses
$b(\log y)\ge
\max\{1,\beta\gamma(\log y)^{\gamma-1}\}$, so that
\[
 \frac{(\log y)^{\gamma-2}}{b(\log y)^2}
 \le\min\bigl\{(\log y)^{\gamma-2},
 (\beta\gamma)^{-2}(\log y)^{-\gamma}\bigr\}
 \longrightarrow0.
\]
Also, $\rho=O_T(1)$ and the preceding bound on $b'$ show that, uniformly
in $t$,
\[
 \frac{(\log y)b(\log y)}
 {(\log y-\rho)b(\log y-\rho)}=1+o(1).
\]
Because $ye^{-\rho}\ge ye^{-T}\to\infty$, applying
\eqref{eq:weighted-pnt} at $ye^{-\rho}$ and at $y$ gives, uniformly in
$t$,
\[
 \frac{A(ye^{-\rho})}{A(y)}
 =e^{F(\log y-\rho)-F(\log y)}
   \frac{(\log y)b(\log y)}
   {(\log y-\rho)b(\log y-\rho)}\bigl(1+o(1)\bigr)
 =e^{-t}+o(1),
\]
uniformly for $0\le t\le T$.  This proves
\eqref{eq:prime-boundary-ratio}.

Finally, let $0<\gamma\le1$ and let $K\subset(0,1]$ be compact.
Uniformly for $u\in K$, the mean value theorem gives
\begin{align*}
 F(\log y+\log u)-F(\log y)
 &=\begin{cases}
    \log u+o(1),&0<\gamma<1,\\
    (1+\beta)\log u,&\gamma=1,
   \end{cases}\\
 \frac{(\log y)b(\log y)}
 {(\log y+\log u)b(\log y+\log u)}
 &=1+o(1),
 \qquad y\to\infty.
\end{align*}
The remainder in the first relation is
$O_K((\log y)^{\gamma-1})$ when $0<\gamma<1$.  The asymptotic
\eqref{eq:weighted-pnt} applies uniformly to $uy$, since
$uy\ge(\min K)y\to\infty$, and yields
\[
 \frac{A(uy)}{A(y)}
 =\begin{cases}
  u+o(1),&0<\gamma<1,\\
  u^{1+\beta}+o(1),&\gamma=1,
 \end{cases}
\]
uniformly for $u\in K$.  This proves \eqref{eq:prime-ratio-u}.
\end{proof}

\subsection{Comparison of the cofactor law with
\texorpdfstring{$\QQ_a$}{Q(a)}}

Let
\[
 \mu_x^{>}=\Law\bigl(R_x\mid P^+(N_x)>\sqrt x\bigr).
\]
The aim is to compare $\mu_x^>$ with $\QQ_a$ and simultaneously
determine the contribution $Z_x^>$.  We first give the tail estimate
under $\QQ_a$ used in the comparison.

\begin{lemma}[Exponential moment bound under $\QQ_a$]
\label{lem:Qa-exp-moment}
Let $M_a\sim\QQ_a$.  For every fixed $\eta\in(0,1)$, there is $a_0>0$
such that
\begin{equation}\label{eq:Qa-exp-moment}
 \sup_{0<a<a_0}
 \E e^{\eta a\log M_a}<\infty.
\end{equation}
Consequently $a\log M_a$ is tight as $a\downarrow0$.
\end{lemma}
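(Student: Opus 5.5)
The moment generating function of $\log M_a$ under $\QQ_a$ is explicit through the Euler product. For $\eta\in(0,1)$ and $a>0$,
\[
 \E e^{\eta a\log M_a}
 =\sum_{m\ge1}\frac{d_\theta(m)m^{-1-a+\eta a}}{\zeta(1+a)^\theta}
 =\left(\frac{\zeta(1+(1-\eta)a)}{\zeta(1+a)}\right)^{\theta}.
\]
Here we use \eqref{eq:dtheta-euler} at $s=1+(1-\eta)a>1$. The series converges because $(1-\eta)a>0$, and all terms are nonnegative, so the identity holds without any rearrangement issue. The problem therefore reduces to bounding a ratio of zeta values near the pole.

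To bound the ratio, I would use the elementary two-sided estimate
\[
 \frac1{\sigma-1}\le\zeta(\sigma)\le\frac{\sigma}{\sigma-1},\qquad \sigma>1.
\]
This follows by comparing $\sum n^{-\sigma}$ with $\int_1^\infty t^{-\sigma}\,dt$ from below and above.

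With $\sigma=1+(1-\eta)a$ in the numerator and $\sigma=1+a$ in the denominator, the ratio is at most
\[
 \frac{(1+(1-\eta)a)/((1-\eta)a)}{1/a}=\frac{1+(1-\eta)a}{1-\eta}.
\]
For $0<a<a_0$ (any $a_0$, say $a_0=1$) this is at most $(2-\eta)/(1-\eta)$. Hence the supremum in \eqref{eq:Qa-exp-moment} is at most $((2-\eta)/(1-\eta))^\theta<\infty$. In fact the bound holds for all $a$ in any bounded interval, so the choice of $a_0$ is immaterial.

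Tightness then follows from Markov's inequality. Fix some $\eta$, say $\eta=1/2$. For $K>0$,
\[
 \Pp(a\log M_a>K)\le e^{-\eta K}\sup_{0<a<a_0}\E e^{\eta a\log M_a},
\]
which tends to $0$ as $K\to\infty$ uniformly in $a\in(0,a_0)$. Since $a\log M_a\ge0$, this gives tightness of the family as $a\downarrow0$.

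There is no real obstacle in this argument. The only point to check is that the $d_\theta$ Dirichlet series identity is used at a real argument exceeding one, which is exactly where it is valid.
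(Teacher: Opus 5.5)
Your proof is correct and follows the paper's route: the exact identity $\E e^{\eta a\log M_a}=\bigl(\zeta(1+(1-\eta)a)/\zeta(1+a)\bigr)^\theta$, a bound on the zeta ratio, and then Markov's inequality for tightness. The only difference is that you bound the ratio with the integral-comparison estimate $1/(\sigma-1)\le\zeta(\sigma)\le\sigma/(\sigma-1)$ instead of the asymptotic $a\zeta(1+a)\to1$, which gives an explicit constant valid for every bounded range of $a$.
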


\begin{proof}
The Dirichlet series \eqref{eq:dtheta-euler}, evaluated at
$1+(1-\eta)a>1$, gives
\[
 \E e^{\eta a\log M_a}
 =
 \left(
 \frac{\zeta(1+(1-\eta)a)}{\zeta(1+a)}
 \right)^\theta.
\]
Since $a\zeta(1+a)\to1$ as $a\downarrow0$, the ratio tends to
$(1-\eta)^{-\theta}$ and is therefore bounded for small $a$.
Markov's inequality then gives the asserted tightness.
\end{proof}

\begin{proposition}[Cofactor comparison with $\QQ_a$]
\label{prop:cofactor-comparison}
Suppose $0<\gamma\le1$ and set $a=a_x$.  Then
\[
 Z_x^>\sim\theta C_x\zeta(1+a)^\theta,
 \qquad
 d_{\TV}(\mu_x^>,\QQ_a)\longrightarrow0.
\]
\end{proposition}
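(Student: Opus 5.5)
We start from the exact decomposition \eqref{eq:exact-decomp}. It shows that $\mu_x^>(m)$ is proportional to $w_x(m):=d_\theta(m)\bigl(A(x/m)-A(\sqrt x)\bigr)$ for $m<\sqrt x$. The natural comparison weight is $\tilde w_x(m):=C_x\,d_\theta(m)m^{-1-a}$, whose total mass over all $m\in\N$ is exactly $C_x\zeta(1+a)^\theta$. Both claims then reduce to a single $L^1$ estimate:
\[
 \sum_{m\ge1}\bigl|w_x(m)\one_{\{m<\sqrt x\}}-\tilde w_x(m)\bigr|
 =o\bigl(C_x\zeta(1+a)^\theta\bigr).
\]
From it we get $Z_x^>=\theta\sum_m w_x(m)\sim\theta C_x\zeta(1+a)^\theta$. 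The total variation bound then follows from the standard inequality
\[
 d_{\TV}\bigl(w/\|w\|_1,\tilde w/\|\tilde w\|_1\bigr)\le\|w-\tilde w\|_1/\|\tilde w\|_1 .
\]
Dividing by $C_x\zeta(1+a)^\theta$, the estimate is equivalent to $\E_{\QQ_a}|\,g_x(M_a)-1|\to0$, where $g_x(m)=w_x(m)/\tilde w_x(m)$ on $m<\sqrt x$ and $g_x(m)=0$ otherwise. This is the $L^1(\QQ_a)$ comparison.

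\emph{Step 1 (pointwise ratio on the bulk).} For $r=\log m\le \sqrt{L}\,/a$, or more generally $r=o(L)$ with $a r^2/L\to0$, apply Lemma~\ref{lem:weighted-pnt} at $y=x/m$. The slowly varying factors give $(\log y)b_y/(Lb_x)\to1$ uniformly for $r=o(L)$. The exponent is
\[
 F(L-r)-F(L)=-r-\beta\bigl(L^\gamma-(L-r)^\gamma\bigr)=-r-ar+O(ar^2/L),
\]
because $|F''|\le C L^{\gamma-2}$ and $aL^{-1}\asymp L^{\gamma-2}$. Hence $A(x/m)=C_x m^{-1-a}(1+o(1))$ uniformly on the bulk. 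The subtracted term $A(\sqrt x)\le C x^{1/2}e^{\beta 2^{-\gamma}L^\gamma}/L$ is negligible against $C_x m^{-1-a}$ for $m\le x^{\epsilon}$ with $\epsilon$ small. So $g_x\to1$ uniformly on the bulk $m\le m_x$, where $\log m_x=\min\{\epsilon L,\, K/a\}$.

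\emph{Step 2 (tails).} For $0<\gamma<1$ we have $a\to0$, and $\log M_a$ lives on scale $1/a=o(L)$. Lemma~\ref{lem:Qa-exp-moment} gives $\QQ_a(\log M_a>K/a)\le Ce^{-\eta K}$. Since $a\cdot (K/a)^2/L=K^2/(aL)\to0$ (because $aL\asymp L^\gamma\to\infty$), the bulk of Step~1 covers $\log m\le K/a$. For $\gamma=1$, $a=\beta$ is fixed, $\QQ_\beta$ is a fixed law, and the tail $\QQ_\beta(m>K)$ is small. It remains to show that $\sum_{m>m_x,\,m<\sqrt x}w_x(m)$ is small relative to $C_x\zeta(1+a)^\theta$. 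For this we use a one-sided bound $A(x/m)\le C\,C_x m^{-1-a'}$ with some $a'$ slightly less than $a$, for instance $a'=(1-\eta)a$. This follows from \eqref{eq:weighted-pnt-upper} together with concavity of $s\mapsto\beta s^\gamma$ for $\gamma\le1$, which gives
\[
 \beta L^\gamma-\beta(L-r)^\gamma\ge \beta\gamma L^{\gamma-1}r=ar .
\]
So in fact $a'=a$ works, up to the bounded factor $Lb_x/((L-r)b_{x/m})\le C$ for $m<\sqrt x$. The tail sum is then at most
\[
 C\,C_x\sum_{m>m_x}d_\theta(m)m^{-1-a}=C\,C_x\zeta(1+a)^\theta\,\QQ_a(m>m_x),
\]
which is small by Step~2's tail bound once $K\to\infty$ slowly. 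The $\tilde w$ tail is the same quantity. This gives the $L^1$ estimate.

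\emph{Main obstacle.} The delicate point is the uniformity in Step~1 near the edge of the bulk. The window $\log m\le K/a$ must simultaneously satisfy $r=o(L)$, $ar^2/L\to0$, and domination of $A(\sqrt x)$. For $\gamma$ close to $0$, $1/a\asymp L^{1-\gamma}$ is nearly $L$, so we need to check that $K^2/(aL)\to0$ and $K/(aL)\to0$ hold. Both hold since $aL\to\infty$, but the choice of $K=K_x\to\infty$ must be slow relative to $(aL)^{1/2}$. The concavity-based domination bound in Step~2 is what makes the tail harmless without a lower-order error analysis.
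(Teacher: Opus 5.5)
Your argument is correct and is essentially the paper's proof: the same density $g_x$ relative to $\QQ_a$, the same steps, and the same treatment of $\gamma=1$ through the fixed law $\QQ_\beta$. The steps are uniform convergence $g_x\to1$ on the window $\log m\le K/a$ with $K^2/(aL)\to0$ (the paper takes $K=4\log L$), the concavity-based domination $g_x\le 2C$ from \eqref{eq:weighted-pnt-upper}, and the exponential-moment tail bound of Lemma~\ref{lem:Qa-exp-moment}. One small slip: the parenthetical window $r\le\sqrt L/a$ in Step~1 does not give $ar^2/L\to0$ (it gives $1/a\to\infty$ when $\gamma<1$), but you never use it, since Step~2 correctly works with $r\le K/a$.
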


\begin{proof}
By Lemma~\ref{lem:exact-decomp}, the Gibbs weight assigned to the
cofactor value $m<\sqrt x$, before normalization, is
\[
 \theta d_\theta(m)\bigl(A(x/m)-A(\sqrt x)\bigr).
\]
For $m\ge1$, define
\[
 g_x(m)
 =\one_{\{m<\sqrt x\}}
 \frac{A(x/m)-A(\sqrt x)}{C_xm^{-1-a}}.
\]
Let $M_a\sim\QQ_a$ and put
$\bar g_x=\E_{\QQ_a}g_x(M_a)$.  Since
$\QQ_a(m)=d_\theta(m)m^{-1-a}/\zeta(1+a)^\theta$, we have the exact
identities
\begin{equation}\label{eq:Z-density}
 Z_x^>=\theta C_x\zeta(1+a)^\theta\bar g_x,
 \qquad
 \frac{d\mu_x^>}{d\QQ_a}(m)=\frac{g_x(m)}{\bar g_x}.
\end{equation}
It remains to prove the density estimate
\begin{equation}\label{eq:gx-L1}
 \E_{\QQ_a}|g_x(M_a)-1|\longrightarrow0.
\end{equation}
First suppose $\gamma=1$, so that $a=\beta$ is fixed.  For every fixed
$m$, Lemma~\ref{lem:weighted-pnt} gives
\[
 \frac{A(x/m)}{C_xm^{-1-\beta}}
 =\frac{L}{L-\log m}\bigl(1+o(1)\bigr)\longrightarrow1,
 \qquad
 \frac{A(\sqrt x)}{C_xm^{-1-\beta}}\longrightarrow0.
\]
Thus $g_x(m)\to1$.  Uniformly for $m<\sqrt x$, the upper bound
\eqref{eq:weighted-pnt-upper} gives
\[
 0\le g_x(m)\le C\frac{L}{L-\log m}\le2C.
\]
Also $g_x(m)=0$ for $m\ge\sqrt x$.  Since $\QQ_\beta$ is a fixed
probability measure, dominated convergence proves \eqref{eq:gx-L1} in
the critical case.

Now suppose $0<\gamma<1$.  Choose
\[
 K_L=4\log L,
 \qquad
 T_L=K_L/a.
\]
Then $T_L=o(L)$, and in particular $T_L<L/2$ for all sufficiently large
$x$.  For $0\le r<L$, put
\[
 \Delta_L(r)
 =\beta\bigl((L-r)^\gamma-L^\gamma\bigr)+ar.
\]
Concavity gives
\[
 (L-r)^\gamma-L^\gamma
 \le-\gamma L^{\gamma-1}r,
 \qquad
 \Delta_L(r)\le0.
\]
Moreover, $K_L^2L^{-\gamma}\to0$.  Taylor's theorem therefore gives
\[
 \sup_{0\le r\le T_L}|\Delta_L(r)|
 \le C L^{\gamma-2}T_L^2
 \le C K_L^2L^{-\gamma}
 \longrightarrow0.
\]
Uniformly for $r=\log m\le T_L$, Lemma~\ref{lem:weighted-pnt} gives
\[
 \frac{A(x/m)}{C_xm^{-1-a}}
 =
 \frac{L}{L-r}
 \frac{1+a}{1+a(1-r/L)^{\gamma-1}}
 \exp\{\Delta_L(r)\}(1+o(1))
 \longrightarrow1.
\]
The convergence is uniform because the preceding estimate controls the
exponential factor and $T_L=o(L)$ controls both fractions.  Also,
\[
 \log\frac{A(\sqrt x)}{A(x/m)}
 =-\frac{L}{2}+r
 +\beta\bigl((L/2)^\gamma-(L-r)^\gamma\bigr)+O(\log L)
 =-\frac{L}{2}+o(L).
\]
Here the $O(\log L)$ term comes from the logarithms of the prefactors in
\eqref{eq:weighted-pnt}.  Consequently,
\begin{equation}\label{eq:A-sqrt-negligible}
 \sup_{\log m\le T_L}\frac{A(\sqrt x)}{A(x/m)}\longrightarrow0.
\end{equation}
Hence
\begin{equation}\label{eq:gx-local-uniform}
 \sup_{\log m\le T_L}|g_x(m)-1|\longrightarrow0.
\end{equation}

For every $m<\sqrt x$, the upper bound \eqref{eq:weighted-pnt-upper} and
$\Delta_L(\log m)\le0$ give
\begin{equation}\label{eq:gx-domination}
 0\le g_x(m)
 \le C\frac{L(1+a)}
 {(L-\log m)\{1+\beta\gamma(L-\log m)^{\gamma-1}\}}
 \le2C,
\end{equation}
where the last step uses $L-\log m\ge L/2$ and
$\beta\gamma(L-\log m)^{\gamma-1}\ge a$.  By the bound
\eqref{eq:Qa-exp-moment} in Lemma~\ref{lem:Qa-exp-moment}, applied with
$\eta=1/2$, and Markov's inequality,
\[
 \QQ_a(\log M_a>T_L)
 =
 \QQ_a(a\log M_a>K_L)
 \le C e^{-K_L/2}=CL^{-2}.
\]
Combining this tail bound,
\eqref{eq:gx-local-uniform}, and \eqref{eq:gx-domination}, we obtain
\[
 \E_{\QQ_a}|g_x(M_a)-1|
 \le
 \sup_{\log m\le T_L}|g_x(m)-1|
 +(2C+1)\QQ_a(\log M_a>T_L),
\]
which proves \eqref{eq:gx-L1}.

Equation \eqref{eq:gx-L1} gives $\bar g_x\to1$.  The two identities in
\eqref{eq:Z-density} now yield
\[
 Z_x^>\sim\theta C_x\zeta(1+a)^\theta.
\]
\begin{align*}
 d_{\TV}(\mu_x^>,\QQ_a)
 &=\frac{1}{2\bar g_x}
   \E_{\QQ_a}|g_x(M_a)-\bar g_x|\\
 &\le\frac{1}{\bar g_x}
   \E_{\QQ_a}|g_x(M_a)-1|
 \longrightarrow0,
\end{align*}
because
$\E_{\QQ_a}|g_x(M_a)-\bar g_x|
\le \E_{\QQ_a}|g_x(M_a)-1|+|\bar g_x-1|
\le2\E_{\QQ_a}|g_x(M_a)-1|$.
\end{proof}

\subsection{Proof of the cofactor and partition theorems}

\begin{proof}[Proof of Theorems~\textup{\ref{thm:cofactor}} and
\textup{\ref{thm:partition}}]
We now combine the preceding estimates.  The case $\gamma=0$ is
immediate, while for positive $\gamma$ we distinguish
$0<\gamma\le1$ from $\gamma>1$.

\smallskip
\noindent\emph{The case $\gamma=0$.}
The partition function is exactly
\[
 Z_x=1+e^\beta\bigl(D_\theta(x)-1\bigr),
\]
and the Selberg--Delange estimate \eqref{eq:selberg-delange} gives
\eqref{eq:Z-zero}.

\smallskip
\noindent\emph{The cases $0<\gamma\le1$.}
Proposition~\ref{prop:cofactor-comparison} and
\eqref{eq:small-sector} give
\[
 Z_x
 =Z_x^>+Z_x^\le
 \sim
 \theta C_x\zeta(1+a)^\theta.
\]
This is \eqref{eq:Z-unified}, and the first two rows of
\eqref{eq:Z-regimes} follow as explained after
Theorem~\ref{thm:partition}.

Moreover,
\[
 \Pp_x(P^+(N_x)\le\sqrt x)
 =
 \frac{Z_x^{\le}}{Z_x}\longrightarrow0
\]
by \eqref{eq:small-sector}.  Removing this vanishing event changes the
law by at most its probability, so
\[
 d_{\TV}(\Law(R_x),\QQ_a)
 \le d_{\TV}(\mu_x^>,\QQ_a)
 +\Pp_x(P^+(N_x)\le\sqrt x),
\]
which proves \eqref{eq:TV-main} for $0<\gamma\le1$.

\smallskip
\noindent\emph{The case $\gamma>1$.}
Now
\[
 b'(s)=\beta\gamma(\gamma-1)s^{\gamma-2}>0,
\]
so $b(s)$ is increasing.  Uniformly for
$0\le r\le L/2$, Lemma~\ref{lem:weighted-pnt} gives
\[
 \frac{A(e^{L-r})}{A(e^L)}
 \le C\exp\{-b(L/2)r\}.
\]
Indeed,
\[
 \frac{Lb(L)}{(L-r)b(L-r)}\le C_{\beta,\gamma},
 \qquad 0\le r\le L/2,
\]
while
\[
 F(L)-F(L-r)=\int_{L-r}^Lb(s)\,ds\ge b(L/2)r.
\]
Since $A(\sqrt x)/A(x)\to0$, the exact decomposition
\eqref{eq:exact-decomp} and the preceding ratio bound give
\[
 0\le
 \frac{Z_x^>}{\theta A(x)}
 -\left(1-\frac{A(\sqrt x)}{A(x)}\right)
 \le C\sum_{m\ge2}d_\theta(m)m^{-b(L/2)}
 =C\bigl(\zeta(b(L/2))^\theta-1\bigr)=o(1),
\]
because $b(L/2)\to\infty$.  Thus $Z_x^>\sim\theta A(x)$.  Moreover,
\eqref{eq:weighted-pnt} gives $A(x)\sim C_x$, while
$\zeta(1+a)^\theta\to1$.  Hence \eqref{eq:small-sector} gives
$Z_x^\le=o(A(x))$, and therefore
\begin{equation}\label{eq:super-Z-prime}
 Z_x\sim\theta A(x),
 \qquad
 \Pp_x(N_x\text{ is prime})
 =\frac{\theta A(x)}{Z_x}\longrightarrow1,
\end{equation}
where we used that the total Gibbs weight of the primes $p\le x$ is
$\theta A(x)$.  This is \eqref{eq:Z-unified} and the third row of
\eqref{eq:Z-regimes}.

Equation \eqref{eq:super-Z-prime} implies
$d_{\TV}(\Law(R_x),\delta_1)\to0$, where $\delta_1$ is the point
mass at one.  On the other hand,
\[
 \QQ_a(1)=\zeta(1+a)^{-\theta}\longrightarrow1.
\]
The triangle inequality, with $\delta_1$ as the intermediate law,
proves \eqref{eq:TV-main} also for $\gamma>1$.
\end{proof}

\section{The Euler product limit}\label{sec:euler}

This section supplies the joint $\ell^1$ limit needed to transfer
Theorem~\ref{thm:cofactor} from the Euler product law $\QQ_{a_x}$ to the
cofactor $R_x$ when $0<\gamma<1$.  We first prove convergence of the
rescaled logarithmic prime factors of $M_a\sim\QQ_a$, including control of
their total mass, and then normalize the limiting gamma process.  Throughout,
$a>0$ is a free parameter, independent of $x$ and $\gamma$, and
$a\downarrow0$.  When $\theta=1$, the asymptotics of each fixed ranked
prime divisor under the zeta law go back to Lloyd \cite{Lloyd1984}; see
also the GEM and Poisson--Dirichlet perspective of Hirth
\cite{Hirth1997}.  Here we establish joint $\ell^1$ convergence of the
rescaled logarithmic prime factors and their total, independence after
normalization, and the extension to all $\theta>0$.

We use the gamma subordinator construction of $\PD(\theta)$.  Let
$(\mathcal G(t))_{t\ge0}$ be the subordinator with
$\E e^{-s\mathcal G(t)}=(1+s)^{-t}$ for $s,t\ge0$ and L\'evy measure
$e^{-u}\,du/u$ on $(0,\infty)$.  If $J_1\ge J_2\ge\cdots$ are its ranked
jumps on $[0,\theta]$, then $\sum_iJ_i=\mathcal G(\theta)$ has the
$\Gam(\theta,1)$ distribution, and
\[
 \left(\frac{J_i}{\mathcal G(\theta)}\right)_{i\ge1}
 \sim\PD(\theta).
\]
Moreover, the total $\mathcal G(\theta)$ is independent of the normalized
sequence \cite{Kingman1975}.  This construction will identify the limit
of the prime factors under $\QQ_a$.

Let $M_a\sim\QQ_a$ and write
\[
 M_a=\prod_p p^{V_{a,p}},
\]
so that $V_{a,p}=\nu_p(M_a)$ is the multiplicity of the prime $p$ in
the random integer $M_a$.  Combining \eqref{eq:Qa-intro},
\eqref{eq:dtheta-euler}, and \eqref{eq:dtheta-local}, for every finite
set of primes $\mathcal P$ and $0\le z_p\le1$ we obtain
\[
 \E_{\QQ_a}\prod_{p\in\mathcal P}z_p^{V_{a,p}}
 =\prod_{p\in\mathcal P}
 \left(\frac{1-p^{-1-a}}{1-z_pp^{-1-a}}\right)^\theta.
\]
Thus the variables $(V_{a,p})_p$ are independent and
\[
 \QQ_a(V_{a,p}=k)
 =
 \frac{(\theta)_k}{k!}
 (1-p^{-1-a})^\theta p^{-k(1+a)},
 \qquad k\ge0.
\]
In particular, $V_{a,p}$ is negative binomial, and
\begin{equation}\label{eq:NB-mean}
 \E_{\QQ_a} V_{a,p}
 =
 \frac{\theta p^{-1-a}}{1-p^{-1-a}}
 \le 2\theta p^{-1-a}.
\end{equation}

Define the point process
\[
 \Xi_a=\sum_pV_{a,p}\delta_{a\log p}
\]
on $(0,\infty)$, where $\delta_u$ denotes the unit point mass at $u$,
and for a measurable test function $g$ write
$\Xi_a(g)=\int g(u)\,\Xi_a(du)$.  For fixed $a>0$, $\Xi_a$ is a finite
point measure almost surely, since
\[
 \sum_p\QQ_a(V_{a,p}>0)
 =\sum_p\bigl(1-(1-p^{-1-a})^\theta\bigr)<\infty
\]
and Borel--Cantelli applies.  The next lemma collects the prime number
theoretic estimates used below.

\begin{lemma}[Primes on the scale $a\log p$]\label{lem:mertens}
\textup{(i)}  As $a\downarrow0$, the measures $\sum_pp^{-1}\delta_{a\log p}$
converge vaguely on $(0,\infty)$ to $du/u$.  Consequently, for every
continuous function $h$ with compact support in $(0,\infty)$,
\begin{equation}\label{eq:prime-rescale}
 \sum_p p^{-1-a}h(a\log p)
 \longrightarrow
 \int_0^\infty h(u)\frac{e^{-u}}u\,du.
\end{equation}
\textup{(ii)}  There is a constant $C$ such that, for $0<a\le1$ and
$B>0$,
\begin{equation}\label{eq:prime-mass-bounds}
 a\sum_{a\log p\le B}\frac{\log p}{p}\le B+Ca,
 \qquad
 a\sum_{a\log p>B}\frac{\log p}{p^{1+a}}\le Ce^{-B}.
\end{equation}
\end{lemma}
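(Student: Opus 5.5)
Both parts follow from Mertens' theorems in the form $\sum_{p\le y}p^{-1}=\log\log y+c_0+O(1/\log y)$ and $\sum_{p\le y}(\log p)/p=\log y+O(1)$, together with partial summation. These are consequences of the prime number theorem already used in Lemma~\ref{lem:weighted-pnt}.

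\emph{Part (i).} For $0<\alpha<\beta'<\infty$ I will count the mass of $\sum_p p^{-1}\delta_{a\log p}$ on $(\alpha,\beta']$. This mass is
\[
\sum_{e^{\alpha/a}<p\le e^{\beta'/a}}p^{-1}
=\log\log e^{\beta'/a}-\log\log e^{\alpha/a}+O(a)
=\log(\beta'/\alpha)+O(a),
\]
which tends to $\int_\alpha^{\beta'}du/u$. Since the limit measure has no atoms, convergence on all such intervals gives vague convergence on $(0,\infty)$, by the standard approximation of $C_c$ functions by step functions. For \eqref{eq:prime-rescale}, I write $p^{-1-a}h(a\log p)=p^{-1}\tilde h(a\log p)$ with $\tilde h(u)=e^{-u}h(u)$. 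This $\tilde h$ is continuous with compact support, so vague convergence applies directly.

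\emph{Part (ii), first bound.} The condition $a\log p\le B$ means $p\le e^{B/a}$. Mertens' estimate $\sum_{p\le y}(\log p)/p\le\log y+C$ for $y\ge1$ then gives $a\sum\le a(B/a+C)=B+Ca$. The case $e^{B/a}<2$ gives an empty sum and is trivial.

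\emph{Part (ii), second bound.} I set $S(t)=\sum_{p\le t}(\log p)/p$, so that $S(t)\le\log t+C$ and $S(t)\ge\log t-C$. Partial summation from $y=e^{B/a}$ gives
\[
\sum_{p>y}\frac{\log p}{p^{1+a}}=\int_{y}^\infty t^{-a}\,dS(t)=-y^{-a}S(y)+a\int_y^\infty S(t)t^{-a-1}\,dt .
\]
The main term is the integral of $t^{-a}\,d(\log t)$, namely $\int_{B/a}^\infty e^{-as}\,ds=e^{-B}/a$, which becomes $e^{-B}$ after multiplying by $a$. The error contributions come from replacing $S$ by $\log t$ up to $O(1)$. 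They are bounded by $Cy^{-a}+Ca\int_y^\infty t^{-a-1}\,dt=2Ce^{-B}$, and multiplying by $a\le1$ keeps them $O(e^{-B})$. If $y<2$, the lower limit is effectively $2$, and the total $a\sum_p(\log p)p^{-1-a}=O(1)$ is bounded by $Ce^{-B}$ because $B<a\log2\le\log 2$ there.

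The only delicate point is handling the boundary term and the $O(1)$ error in $S$ without losing the factor $e^{-B}$. The bound $y^{-a}=e^{-B}$ makes this work uniformly in $a\le1$.
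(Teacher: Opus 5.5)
Your proof is correct and follows the paper's route for part (i) (Mertens' second theorem on intervals, diffuseness of $du/u$, then testing against $e^{-u}h(u)$) and for the first bound in (ii). For the tail bound, the paper uses partial summation against $\vartheta(t)=\sum_{p\le t}\log p$ and needs only Chebyshev's upper bound $\vartheta(t)\le Ct$, discarding the nonpositive boundary term; this is slightly shorter than your two-sided comparison of $S(t)$ with $\log t$, although your version yields the sharp estimate $e^{-B}(1+O(a))$. Your separate treatment of the case $e^{B/a}<2$ is also unnecessary: $|S(t)-\log t|$ is bounded on all of $[1,\infty)$, and $e^{B/a}>1$ always.
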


\begin{proof}
(i) Mertens' second theorem \cite{MontgomeryVaughan2006}
says that, for a constant $c_0$,
$\sum_{p\le y}p^{-1}=\log\log y+c_0+o(1)$.  Hence the mass that
$\sum_pp^{-1}\delta_{a\log p}$ gives to an interval
$[r,s]\subset(0,\infty)$ is
\[
 \sum_{e^{r/a}\le p\le e^{s/a}}\frac1p
 =\log\frac{s}{a}-\log\frac{r}{a}+o(1)
 =\log\frac sr+o(1),
\]
which is the $du/u$ mass of $[r,s]$; since the limit is diffuse, this
gives vague convergence.  Multiplying by the compactly supported
continuous function $u\mapsto e^{-u}h(u)$ and writing
$p^{-1-a}=p^{-1}e^{-a\log p}$ proves \eqref{eq:prime-rescale}.

(ii) The first bound is Mertens' first theorem
\cite{MontgomeryVaughan2006},
$\sum_{p\le y}\log p/p=\log y+O(1)$, at $y=e^{B/a}$.  For the second, let
$\vartheta(y)=\sum_{p\le y}\log p$, so that $\vartheta(y)\le Cy$ by
Chebyshev's bound \cite{MontgomeryVaughan2006}, and put
$y_0=e^{B/a}$.  Stieltjes integration by parts
gives
\[
 \sum_{p>y_0}\frac{\log p}{p^{1+a}}
 =\int_{y_0^+}^\infty t^{-1-a}\,d\vartheta(t)
 \le(1+a)\int_{y_0}^\infty\vartheta(t)t^{-2-a}\,dt
 \le C(1+a)\frac{y_0^{-a}}{a},
\]
and $y_0^{-a}=e^{-B}$.
\end{proof}

\begin{proposition}[Rescaled logarithms of the prime factors]
\label{prop:point-process}
As $a\downarrow0$,
\begin{equation}\label{eq:Xi-convergence}
 \Xi_a\Longrightarrow\Xi
 \quad\text{vaguely on }(0,\infty),
\end{equation}
where $\Xi$ is a Poisson point process with intensity
\begin{equation}\label{eq:gamma-levy}
 \nu_\theta(du)=\theta\frac{e^{-u}}u\,du.
\end{equation}
If $L_{a,1}\ge L_{a,2}\ge\cdots$ are the atoms of $\Xi_a$,
listed with multiplicity and padded with zeros, and
$J_1\ge J_2\ge\cdots$ are those of $\Xi$, then
\begin{equation}\label{eq:l1-atoms}
 (L_{a,i})_{i\ge1}\Longrightarrow(J_i)_{i\ge1}
 \quad\text{in }\ell^1_\downarrow.
\end{equation}
\end{proposition}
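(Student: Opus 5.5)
The plan has three steps: identify the limiting point process through Laplace functionals, control the total mass of the atoms, and then upgrade vague convergence to $\ell^1$ convergence of the ranked atoms.

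\emph{Step 1: vague convergence via Laplace functionals.} Fix a continuous $h\ge0$ with compact support in $[r,s]\subset(0,\infty)$. Independence of the $V_{a,p}$ and the negative binomial generating function give
\[
 -\log\E_{\QQ_a}e^{-\Xi_a(h)}
 =\theta\sum_p\log\frac{1-e^{-h(a\log p)}p^{-1-a}}{1-p^{-1-a}}.
\]
Only primes with $e^{r/a}\le p\le e^{s/a}$ contribute a nonzero summand, and these are large as $a\downarrow0$. For such $p$, expand $\log(1-z)=-z+O(z^2)$. The quadratic errors total at most $\sum_{p>e^{r/a}}p^{-2}\to0$. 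The summand is then
\[
 \theta\,(1-e^{-h(a\log p)})\,p^{-1-a}+o(1),
\]
summed over primes. Applying \eqref{eq:prime-rescale} in Lemma~\ref{lem:mertens}(i) with the compactly supported continuous function $1-e^{-h}$ gives the limit
\[
 \int_0^\infty (1-e^{-h(u)})\,\nu_\theta(du).
\]
This is the Laplace functional of the Poisson process $\Xi$, and convergence of Laplace functionals for all such $h$ is equivalent to \eqref{eq:Xi-convergence}.

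\emph{Step 2: total mass.} The total mass of $\Xi_a$ is
\[
 \Xi_a(\mathrm{id})=\sum_i L_{a,i}=a\log M_a,
\]
and its Laplace transform is explicit:
\[
 \E e^{-\lambda a\log M_a}=\left(\frac{\zeta(1+a+\lambda a)}{\zeta(1+a)}\right)^\theta\longrightarrow(1+\lambda)^{-\theta}.
\]
This uses $a\zeta(1+a)\to1$ as in Lemma~\ref{lem:Qa-exp-moment}. Hence $a\log M_a\Rightarrow\Gam(\theta,1)$, which is the law of $\sum_i J_i$. To tie this total to $\Xi$, I need the two ends of $(0,\infty)$ to be uniformly negligible. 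Using \eqref{eq:NB-mean} and Lemma~\ref{lem:mertens}(ii), for the near-zero part
\[
 \E\,\Xi_a\bigl(u\one_{u\le\varepsilon}\bigr)\le2\theta a\sum_{a\log p\le\varepsilon}\frac{\log p}{p}\le2\theta(\varepsilon+Ca),
\]
and for the tail
\[
 \E\,\Xi_a\bigl(u\one_{u>B}\bigr)\le 2\theta Ce^{-B}.
\]
These bounds are uniform in small $a$.

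\emph{Step 3: $\ell^1$ convergence of ranked atoms.} The main obstacle is passing from vague convergence to $\ell^1$ convergence of the ranked atoms, because atoms can escape to $0$ or to $\infty$. I would proceed by truncation. For fixed $0<\varepsilon<B$ with $\varepsilon,B$ continuity points, the restriction of $\Xi_a$ to $[\varepsilon,B]$ converges in distribution as a finite point measure. Its support is compact and away from zero, so vague convergence together with convergence of the atom count (Step 1 applied to indicator approximations) gives convergence of the ranked atom vector. The truncated vector of atoms in $[\varepsilon,B]$ then converges in $\ell^1_\downarrow$, since it has a tight number of coordinates and the ranking map is continuous there.

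The $\ell^1$ distance between the full ranked vector and the truncated one is at most $\Xi_a(u\one_{u<\varepsilon})+\Xi_a(u\one_{u>B})$. This is a sum of coordinates, with truncation by value and ranking compatible. By Step 2 its expectation is $O(\varepsilon+a+e^{-B})$, uniformly in $a$. The same bound holds for $\Xi$, from $\int_0^\varepsilon\theta e^{-u}du$ and $\int_B^\infty\theta e^{-u}du$.

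A standard approximation lemma (Billingsley's Theorem 3.2 on approximating sequences) then yields \eqref{eq:l1-atoms}. In this route Step 2's Gamma limit serves only as a consistency check: the first-moment bounds already do the work.
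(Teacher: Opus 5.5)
Your proposal is correct and follows the paper's proof essentially step for step. The steps match as follows: the Laplace functional computation (you keep the closed form $\theta\sum_p\log\bigl\{(1-e^{-h(a\log p)}p^{-1-a})/(1-p^{-1-a})\bigr\}$ and expand it to first order, whereas the paper expands the series in $k$ and isolates $k=1$, with the same $\sum_{p>e^{r/a}}p^{-2}$ error); the first-moment bounds \eqref{eq:small-mass-bound}--\eqref{eq:large-mass-bound} from Lemma~\ref{lem:mertens}(ii); and truncation to $[\varepsilon,B]$ followed by Billingsley's approximation theorem. Your explicit $\Gam(\theta,1)$ limit of $a\log M_a$ is, as you note, not needed here, and the paper instead derives it afterwards from the $\ell^1$ convergence in Proposition~\ref{prop:euler-limit}.
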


\begin{proof}
Let $g:(0,\infty)\to[0,\infty)$ be continuous with compact support.
The negative binomial generating function gives
\begin{align*}
 -\log\E_{\QQ_a} e^{-\Xi_a(g)}
 &=
 \theta\sum_p\sum_{k\ge1}
 \frac{p^{-k(1+a)}}k
 \bigl(1-e^{-kg(a\log p)}\bigr).
\end{align*}
Applying \eqref{eq:prime-rescale} to $h=1-e^{-g}$ treats the $k=1$ term.
If $g$ vanishes outside $[\varepsilon,K]$, the remaining terms are at most
\[
 C\sum_{p\ge e^{\varepsilon/a}}p^{-2}=o(1).
\]
Consequently
\[
 \E_{\QQ_a} e^{-\Xi_a(g)}
 \longrightarrow
 \exp\left\{
 -\theta\int_0^\infty
 (1-e^{-g(u)})\frac{e^{-u}}u\,du
 \right\}.
\]
This is the Laplace functional of $\Xi$, so the Laplace functional criterion
\cite{Kallenberg2017} proves \eqref{eq:Xi-convergence}.

We next control the logarithmic mass near zero and near infinity.  By
\eqref{eq:NB-mean} and \eqref{eq:prime-mass-bounds}, for
$\varepsilon>0$, $K>0$ and $0<a\le1$,
\begin{align}
 \E_{\QQ_a}\int_{(0,\varepsilon]}u\,\Xi_a(du)
 &=\sum_{a\log p\le\varepsilon}a\log p\,\E_{\QQ_a} V_{a,p}
 \le2\theta a\sum_{a\log p\le\varepsilon}\frac{\log p}{p}
 \le2\theta(\varepsilon+Ca),
 \label{eq:small-mass-bound}\\
 \E_{\QQ_a}\int_{(K,\infty)}u\,\Xi_a(du)
 &\le2\theta a\sum_{a\log p>K}\frac{\log p}{p^{1+a}}
 \le C_\theta e^{-K}.
 \label{eq:large-mass-bound}
\end{align}
For the limit, \eqref{eq:gamma-levy} gives directly
$\E\int_{(0,\varepsilon]}u\,\Xi(du)=\theta(1-e^{-\varepsilon})\le\theta\varepsilon$
and $\E\int_{(K,\infty)}u\,\Xi(du)=\theta e^{-K}$.  Markov's inequality
therefore shows that the $\ell^1$ mass of the atoms below $\varepsilon$
and above $K$ becomes negligible in probability as, respectively,
$\varepsilon\downarrow0$ and $K\to\infty$, uniformly for small $a$ and
for the limit.

It remains to pass from point measures to ranked sequences.  Choose
levels $0<\varepsilon<K<\infty$; they are almost surely not atoms of
$\Xi$, because its intensity is diffuse.  Restriction to
$[\varepsilon,K]$, followed by listing the finitely many atoms in
decreasing order with multiplicity and padding with zeros, is continuous
in the vague topology at every point measure having no atom at
$\varepsilon$ or $K$.  Hence \eqref{eq:Xi-convergence} gives convergence
in distribution of these truncated ranked vectors in $\ell^1$.  The
$\ell^1$ distance between a full ranked vector and its truncation is
exactly the mass of the discarded atoms, which is uniformly negligible
in probability by \eqref{eq:small-mass-bound} and
\eqref{eq:large-mass-bound}, and the same holds for $\Xi$.  The standard
approximation theorem for weak convergence
\cite{Billingsley1999}, applied with
$(\varepsilon,K)=(1/j,j)$ and then $j\to\infty$, now proves
\eqref{eq:l1-atoms}.
\end{proof}

\begin{proposition}[Gamma and Poisson--Dirichlet limit under
$\QQ_a$]\label{prop:euler-limit}
Let $M_a\sim\QQ_a$.  As $a\downarrow0$,
\begin{equation}\label{eq:Gamma-PD}
 \left(a\log M_a,\W(M_a)\right)
 \Longrightarrow (G,\bP)
\end{equation}
in $\R_+\times\ell^1_\downarrow$, where
\[
 G\sim\Gam(\theta,1),
 \qquad
 \bP\sim\PD(\theta),
\]
and $G$ and $\bP$ are independent.
\end{proposition}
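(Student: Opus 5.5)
The plan is to read both components off Proposition~\ref{prop:point-process} through a single continuous map applied to the ranked atoms. By definition of $\Xi_a$, the atoms $L_{a,i}$ are exactly $a\log P_i(M_a)$, listed with multiplicity, so
\[
 a\log M_a=\sum_i L_{a,i},
 \qquad
 \W(M_a)=\Bigl(\frac{L_{a,i}}{\sum_jL_{a,j}}\Bigr)_{i\ge1}
 \quad\text{on }\{M_a\ge2\}.
\]
Define $\Phi:\ell^1_\downarrow\to\R_+\times\ell^1_\downarrow$ by $\Phi(\mathbf s)=(\|\mathbf s\|_1,\mathbf s/\|\mathbf s\|_1)$ when $\mathbf s\ne0$, and $\Phi(0)=(0,(1,0,0,\ldots))$. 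This matches the convention $\W(1)=(1,0,\ldots)$, so $(a\log M_a,\W(M_a))=\Phi((L_{a,i})_i)$ holds identically.

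The map $\mathbf s\mapsto\|\mathbf s\|_1$ is $1$-Lipschitz on $\ell^1$. Normalization $\mathbf s\mapsto\mathbf s/\|\mathbf s\|_1$ is continuous at every $\mathbf s\ne0$, by the elementary bound
\[
 \Bigl\|\frac{\mathbf s}{\|\mathbf s\|}-\frac{\mathbf t}{\|\mathbf t\|}\Bigr\|_1\le\frac{2\|\mathbf s-\mathbf t\|_1}{\|\mathbf s\|_1}.
\]
Hence $\Phi$ is continuous off the single point $0$. The limit $(J_i)$ satisfies $\sum_iJ_i=\mathcal G(\theta)\sim\Gam(\theta,1)$, which is almost surely positive, so $\Pp((J_i)=0)=0$.

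With this, \eqref{eq:l1-atoms} and the continuous mapping theorem \cite{Billingsley1999} give
\[
 (a\log M_a,\W(M_a))\Longrightarrow\Bigl(\sum_iJ_i,\bigl(J_i/\textstyle\sum_jJ_j\bigr)_i\Bigr).
\]
It remains to identify this limit. $\Xi$ is Poisson with intensity $\theta e^{-u}du/u$, so its atoms have the law of the ranked jumps of the gamma subordinator $\mathcal G$ on $[0,\theta]$. By the Kingman construction recalled at the start of the section, the total is $\Gam(\theta,1)$, the normalized sequence is $\PD(\theta)$, and the two are independent. This is exactly $(G,\bP)$ with $G$ independent of $\bP$.

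The substantive work, namely $\ell^1$ convergence including control of the total mass, is already done in Proposition~\ref{prop:point-process}. The only delicate point left is the discontinuity of normalization at the zero sequence, that is, on the event $M_a=1$. This is harmless because the limit puts no mass there. Indeed, $\QQ_a(1)=\zeta(1+a)^{-\theta}\sim a^\theta\to0$.
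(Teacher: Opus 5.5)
Your argument is correct and is essentially the paper's proof. Both apply the continuous mapping theorem to the $\ell^1_\downarrow$ convergence of Proposition~\ref{prop:point-process}, use continuity of the sum and of normalization away from the zero sequence together with $G>0$ almost surely, and identify the limit through the gamma subordinator construction of $\PD(\theta)$. The paper differs only in re-deriving the independence of $\mathcal G(\theta)$ from the normalized jumps (via independent gamma masses on partitions, Dirichlet proportions, and a monotone class argument) rather than invoking Kingman's result. Note also that the zero sequence is harmless because the limit gives it no mass; the prelimit fact $\QQ_a(1)\to0$ is true but not needed for the continuous mapping step.
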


\begin{proof}
The atoms of $\Xi_a$ in decreasing order are precisely
\[
 (a\log P_i(M_a))_{i\ge1}.
\]
By Proposition~\ref{prop:point-process},
\[
 (a\log P_i(M_a))_{i\ge1}
 \Longrightarrow (J_i)_{i\ge1}
 \quad\text{in }\ell^1_\downarrow.
\]
The sum map is continuous on $\ell^1$, and therefore
\[
 a\log M_a
 =
 \sum_i a\log P_i(M_a)
 \Longrightarrow
 G:=\sum_iJ_i.
\]
The point process with intensity \eqref{eq:gamma-levy} is the jump
measure of the gamma subordinator described above on the time interval
$[0,\theta]$.  To identify the joint law, realize it as
the projection of a Poisson process $\sum_i\delta_{(T_i,J_i)}$ on
$[0,\theta]\times(0,\infty)$ with intensity
\[
 dt\,e^{-u}\frac{du}{u};
\]
its projection onto the second coordinate is $\Xi$.  The induced random
measure $\mathcal G(C)=\sum_{i:T_i\in C}J_i$ on $[0,\theta]$ has
independent masses on disjoint sets.  If $|C|$ denotes Lebesgue measure,
then for a measurable set $C$ and $s\ge0$,
\[
 \E\exp\{-s\mathcal G(C)\}
 =\exp\left\{-|C|\int_0^\infty(1-e^{-su})e^{-u}\frac{du}{u}\right\}
 =(1+s)^{-|C|}.
\]
If $|C|=0$, the intensity of points with time coordinate in $C$ is zero,
so $\mathcal G(C)=0$ almost surely.  If $|C|>0$, the preceding transform
identifies $\mathcal G(C)\sim\Gam(|C|,1)$.  Under this realization, the
subordinator above is $\mathcal G(t)=\mathcal G([0,t])$, and in particular
$G=\mathcal G([0,\theta])\sim\Gam(\theta,1)$.  On every finite measurable
partition, independent gamma variables with a common rate have a
Dirichlet vector of proportions which is independent of their sum.
Choose a nested countable family of finite partitions that generates the
Borel sigma field of $[0,\theta]$.  The factorization therefore holds for
every bounded cylinder function of the normalized masses.  Since these
cylinder functions generate the sigma field of the normalized random
measure, the monotone class theorem shows that
$\mathcal G/\mathcal G([0,\theta])$ is independent of its total.  The
ranked atom sizes are measurable functions of the normalized random
measure, and by the construction above they have law $\PD(\theta)$; see
Kingman \cite{Kingman1975} or Pitman \cite{Pitman2003}.
Consequently
\[
 G=\sum_iJ_i
 \quad\text{is independent of}\quad
 \left(\frac{J_i}{G}\right)_{i\ge1}.
\]
The normalization map is continuous at every nonzero sequence, and the
limiting sequence has norm $G>0$ almost surely.  At the prelimit zero
sequence we use the convention $\W(1)=(1,0,0,\ldots)$; its probability is
$\QQ_a(M_a=1)=\zeta(1+a)^{-\theta}\sim a^\theta$.  The extended
continuous mapping theorem applied to \eqref{eq:l1-atoms} now proves
\eqref{eq:Gamma-PD}, including the independence assertion.
\end{proof}

\section{Joint limits and the phase diagram}\label{sec:joint}

The first four subsections prove Theorems~\ref{thm:zero}
and~\ref{thm:joint}, one parameter regime at a time.  The final subsection
derives their common consequences and proves the corollaries, using the
cofactor results of Section~\ref{sec:arithmetic} and the Euler product
limit of Section~\ref{sec:euler}.

\subsection{The case \texorpdfstring{$\gamma=0$}{gamma = 0}}\label{sec:zero}

\begin{proof}[Proof of Theorem~\textup{\ref{thm:zero}}]
Let $\widehat N_x$ have the unperturbed cutoff law
\begin{equation}\label{eq:pure-cutoff}
 \widehat\Pp_x(\widehat N_x=n)
 =\frac{d_\theta(n)}{D_\theta(x)},
 \qquad n\le x,
\end{equation}
which is the cutoff law weighted by $d_\theta$ introduced in
Section~\ref{sec:billingsley}.
We write $\widehat\Pp_x$ and $\widehat\E_x$ for probability and
expectation under this law.  The laws in
\eqref{eq:model} and \eqref{eq:pure-cutoff} have the same conditional
distribution on $\{n\ge2\}$.  Hence
\begin{equation}\label{eq:zero-TV}
 d_{\TV}\bigl(\Law(N_x),\Law(\widehat N_x)\bigr)
 =
 \left|\frac1{Z_x}-\frac1{D_\theta(x)}\right|
 =O\bigl(D_\theta(x)^{-1}\bigr)=o(1).
\end{equation}

The generalized Billingsley theorem
\cite[Theorem~1.1]{ElboimGorodetsky2024} applies to the multiplicative
function $\alpha=d_\theta$.  In the notation of that theorem, take
$d=0$, $\mathfrak a=1/2$, $\eta=1/2$, and $r=3/2$.  Indeed,
$d_\theta(p)=\theta$ at every prime, so for some $c>0$ the prime number
theorem gives
\[
 \sum_{p\le y}d_\theta(p)\log p
 =\theta y+O\bigl(ye^{-c\sqrt{\log y}}\bigr)
 =\theta y+O\bigl(y(\log y)^{-1/2}\bigr).
\]
Moreover, $d_\theta(p)/p^d=\theta=O(p^{1/2-\eta})$ for
$\eta=1/2$, and, uniformly in the prime $p$,
\[
 d_\theta(p^k)
 =\frac{\Gamma(k+\theta)}{\Gamma(\theta)\Gamma(k+1)}
 =O_\theta\bigl((1+k)^{\max\{\theta-1,0\}}\bigr)
 =O_\theta\bigl((3/2)^k\bigr),
 \qquad k\ge1.
\]
Thus the hypotheses of their theorem hold, and it yields
\begin{equation}\label{eq:EG-PD}
 \left(
 \frac{\log P_i(\widehat N_x)}{\log x}
 \right)_{i\ge1}
 \Longrightarrow\bP,
 \qquad \bP\sim\PD(\theta),
\end{equation}
initially in the product topology.  Regular variation of $D_\theta$ gives
\begin{equation}\label{eq:pure-location}
 \widehat\Pp_x(\widehat N_x/x\le u)
 =\frac{D_\theta(ux)}{D_\theta(x)}\longrightarrow u,
 \qquad 0<u\le1,
\end{equation}
where integer parts are understood.  It also gives
\[
 \frac{\log\widehat N_x}{\log x}\xrightarrow{\widehat{\Pp}}1,
\]
since, for every $\varepsilon>0$,
\[
 \widehat\Pp_x(\widehat N_x\le x^{1-\varepsilon})
 =\frac{D_\theta(x^{1-\varepsilon})}{D_\theta(x)}=o(1).
\]

It remains to upgrade \eqref{eq:EG-PD} to the $\ell^1$ topology.  Put
\[
 \mathbf X_x=
 \left(\frac{\log P_i(\widehat N_x)}{\log x}\right)_{i\ge1},
 \qquad
 T_x=\frac{\log\widehat N_x}{\log x},
\]
so that $\|\mathbf X_x\|_1=T_x$.  By \eqref{eq:EG-PD}, the convergence
$T_x\xrightarrow{\widehat{\Pp}}1$, and Slutsky's theorem,
$(\mathbf X_x,T_x)\Rightarrow(\bP,1)$, where the sequence
coordinate carries the product topology.  On a Skorohod representation,
the coordinates and their sums converge almost surely.  For nonnegative
sequences this implies $\ell^1$ convergence, by first truncating at a
fixed coordinate.  Hence $\mathbf X_x\Rightarrow\bP$ in
$\ell^1_\downarrow$.  On $\{\widehat N_x\ge2\}$, we have
$\W(\widehat N_x)=\mathbf X_x/T_x$.  Since $T_x\to1$ in
probability and
\[
 \widehat\Pp_x(\widehat N_x=1)=D_\theta(x)^{-1}=o(1),
\]
the extended continuous mapping theorem gives
\begin{equation}\label{eq:pure-PD-logN}
 \W(\widehat N_x)\Longrightarrow\bP
 \quad\text{in }\ell^1_\downarrow.
\end{equation}

It remains to prove independence.  If $f$ is bounded and continuous
on $\ell^1_\downarrow$, $u\in(0,1]$, and $y=\lfloor ux\rfloor$,
let $\widehat N_y$ denote the same cutoff variable under $\widehat\Pp_y$.
The cutoff family has the exact consistency identity
\begin{align*}
 \widehat\E_x\left[
 f(\W(\widehat N_x))
 \one_{\{\widehat N_x/x\le u\}}
 \right]
 =
 \frac{D_\theta(y)}{D_\theta(x)}
 \widehat\E_y f(\W(\widehat N_y)).
\end{align*}
The right side tends to
$u\E f(\bP)$ by \eqref{eq:pure-location} and
\eqref{eq:pure-PD-logN}.  Since $\widehat N_x/x$ takes values in
$[0,1]$ and $\W(\widehat N_x)$ is tight, these mixed expectations
determine the joint limit, and their factorized form proves independence
in \eqref{eq:zero-joint}.  Equation~\eqref{eq:zero-TV} transfers the
result to $N_x$.
\end{proof}

\subsection{The case \texorpdfstring{$0<\gamma<1$}{0 < gamma < 1}}

\begin{proof}[Proof of Theorem~\textup{\ref{thm:joint}\textup{(i)}}]
Put $L=\log x$.  Set
$\tau_x(m)=(a_x\log m,\W(m))$.  Since total variation contracts under
measurable maps, Theorem~\ref{thm:cofactor} gives
\[
 d_{\TV}\bigl(\Law(\tau_x(R_x)),\Law(\tau_x(M_{a_x}))\bigr)
 \longrightarrow0.
\]
Proposition~\ref{prop:euler-limit} therefore yields
\begin{equation}\label{eq:cofactor-joint}
 (a_x\log R_x,\W(R_x))
 \Longrightarrow(G,\bP).
\end{equation}

It remains to prove joint convergence with $N_x/x$.  Let $\Phi$ be bounded and
continuous on $\R_+\times\ell^1_\downarrow$.  We shall prove that, for
every $u\in(0,1]$,
\begin{align}
 &\E_x\left[
 \Phi(a_x\log R_x,\W(R_x))
 \one_{\{N_x/x\le u\}}
 \right]                                                   \notag\\
 &\hspace{18mm}
 -
 u\E_{\QQ_{a_x}}
 \Phi(a_x\log M_{a_x},\W(M_{a_x}))
 \longrightarrow0.                                      \label{eq:uniform-factorization}
\end{align}
For $u=1$, the indicator is identically one and the assertion follows
directly from Theorem~\ref{thm:cofactor}.  We may therefore assume
$0<u<1$.
It is enough to work on $\{P^+(N_x)>\sqrt x\}$, whose complement has
probability tending to zero.  Given the cofactor $m<\sqrt x$, put
$y=x/m$.  The unnormalized weight of the primes for which
$pm/x\le u$ is $\bigl(A(uy)-A(\sqrt x)\bigr)_+$.  On the effective set
\[
 \mathcal T_x=\{m:a_x\log m\le4\log L\},
\]
we have $\log m=o(L)$, so $uy>\sqrt x$ for all sufficiently
large $x$.  Since $\log y\ge L/2\to\infty$,
equation~\eqref{eq:prime-ratio-u} gives, uniformly on
$\mathcal T_x$, the ratio $A(uy)/A(y)\to u$.  The estimate
\eqref{eq:A-sqrt-negligible} also gives
$A(\sqrt x)/A(y)\to0$ uniformly on this set.  Therefore
\begin{equation}\label{eq:A-u-ratio}
 \frac{A(uy)-A(\sqrt x)}
 {A(y)-A(\sqrt x)}
 \longrightarrow u
 \qquad\text{uniformly for }m\in\mathcal T_x.
\end{equation}
Put $\Phi_x(m)=\Phi(a_x\log m,\W(m))$.  The exact contribution from
$\{P^+(N_x)>\sqrt x\}$ to the first expectation in
\eqref{eq:uniform-factorization} is
\begin{equation}\label{eq:uniform-exact-sum}
 \frac{\theta}{Z_x}
 \sum_{m<\sqrt x}d_\theta(m)\Phi_x(m)
 \bigl(A(ux/m)-A(\sqrt x)\bigr)_+.
\end{equation}
Define, with value zero when the denominator vanishes,
\[
 \rho_{x,u}(m)=
 \frac{\bigl(A(ux/m)-A(\sqrt x)\bigr)_+}
 {A(x/m)-A(\sqrt x)}.
\]
On the support of the conditional cofactor law $\mu_x^>$, the
denominator is positive and $0\le\rho_{x,u}\le1$.  Hence
\eqref{eq:uniform-exact-sum} is exactly
\[
 \frac{Z_x^>}{Z_x}
 \E_{\mu_x^>}\bigl[\Phi_x(R_x)\rho_{x,u}(R_x)\bigr].
\]
The complement of $\mathcal T_x$ has vanishing mass under
$\mu_x^>$ and $\QQ_{a_x}$, by
Proposition~\ref{prop:cofactor-comparison} and
the bound \eqref{eq:Qa-exp-moment} in
Lemma~\ref{lem:Qa-exp-moment}.  Moreover,
\[
 \left|\E_{\mu_x^>}\left[
 \Phi_x(R_x)\{\rho_{x,u}(R_x)-u\}\right]\right|
 \le \|\Phi\|_\infty
 \left(
  \sup_{\substack{m\in\mathcal T_x\\ \mu_x^>(m)>0}}
  |\rho_{x,u}(m)-u|
  +\mu_x^>(\mathcal T_x^c)
 \right)=o(1)
\]
by \eqref{eq:A-u-ratio}.  Finally,
\[
 \left|\E_{\mu_x^>}\Phi_x(R_x)
 -\E_{\QQ_{a_x}}\Phi_x(M_{a_x})\right|
 \le2\|\Phi\|_\infty d_{\TV}(\mu_x^>,\QQ_{a_x})=o(1),
\]
while $Z_x^>/Z_x\to1$ and the omitted event
$\{P^+(N_x)\le\sqrt x\}$ has probability tending to zero.  Combining
these estimates proves \eqref{eq:uniform-factorization}.  By
Proposition~\ref{prop:euler-limit}, the comparison term converges to
$u\E\Phi(G,\bP)$.  Since $N_x/x$ takes values in $[0,1]$ and the
cofactor coordinates are tight by \eqref{eq:cofactor-joint}, the
functions $(v,z)\mapsto\one_{\{v\le u\}}\Phi(z)$, with $0<u<1$,
form a determining class.
Thus \eqref{eq:uniform-factorization} proves \eqref{eq:joint-limit} and
the mutual independence of the three limits.
\end{proof}

\subsection{The case \texorpdfstring{$\gamma=1$}{gamma = 1}}

\begin{proof}[Proof of Theorem~\textup{\ref{thm:joint}\textup{(ii)}}]
Let $h:\N\to\R$ be bounded.  We first prove that, for every
$u\in(0,1]$,
\begin{equation}\label{eq:critical-factorization}
 \E_x\left[h(R_x)\one_{\{N_x/x\le u\}}\right]
 \longrightarrow
 u^{1+\beta}\E_{\QQ_\beta}h(M_\beta).
\end{equation}
For $u=1$, the indicator is identically one and the assertion follows
from Theorem~\ref{thm:cofactor}.  We may therefore assume $0<u<1$.
The complement of $\{P^+(N_x)>\sqrt x\}$ has probability tending to
zero.  On $\{P^+(N_x)>\sqrt x\}$, the unnormalized numerator on the left of
\eqref{eq:critical-factorization} is
\[
 \theta\sum_{m<\sqrt x}d_\theta(m)h(m)
 \bigl(A(ux/m)-A(\sqrt x)\bigr)_+.
\]
Define
\[
 \rho_{x,u}(m)=
 \frac{\bigl(A(ux/m)-A(\sqrt x)\bigr)_+}
 {A(x/m)-A(\sqrt x)},
\]
with value zero if the denominator vanishes.  Thus
\[
 \E_x\left[h(R_x)\one_{\{N_x/x\le u\}}\right]
 =\frac{Z_x^>}{Z_x}\,
 \E_{\mu_x^>}\left[h(R_x)\rho_{x,u}(R_x)\right]+o(1).
\]
Fix $M$.  Uniformly for $m\le M$,
\eqref{eq:prime-ratio-u} and
$A(\sqrt x)/A(x/m)\to0$ give
\[
 \frac{A(ux/m)-A(\sqrt x)}
 {A(x/m)-A(\sqrt x)}\longrightarrow u^{1+\beta}.
\]
The ratio $\rho_{x,u}$ lies in $[0,1]$, so the
normalized contribution of the terms $m>M$ is at most
$\|h\|_\infty\mu_x^>(\{m\in\N:m>M\})$.
Proposition~\ref{prop:cofactor-comparison}
gives
$d_{\TV}(\mu_x^>,\QQ_\beta)\to0$, and $\QQ_\beta$ is a fixed law.
Since $Z_x^>/Z_x\to1$, truncating, letting $x\to\infty$, and then
$M\to\infty$
proves \eqref{eq:critical-factorization}.  The convergence for all
$u$ and bounded $h$ determines the joint law, and its factorized form
proves \eqref{eq:critical-joint}, including independence.
\end{proof}

\subsection{The case \texorpdfstring{$\gamma>1$}{gamma > 1}}

\begin{proof}[Proof of Theorem~\textup{\ref{thm:joint}\textup{(iii)}}]
By \eqref{eq:super-Z-prime}, it is enough to condition on $N_x$ being
prime.  Under that conditioning its law is
\[
 \Pp_x(N_x=p\mid N_x\text{ is prime})=\frac{e^{\beta(\log p)^\gamma}}{A(x)}.
\]
For fixed $t\ge0$, \eqref{eq:prime-boundary-ratio} with $y=x$ gives
\begin{align*}
 \Pp_x\left(b_x\log\frac{x}{N_x}\ge t
 \mathrel{\big|}N_x\text{ is prime}\right)
 =\frac{A(xe^{-t/b_x})}{A(x)}
 \longrightarrow e^{-t}.
\end{align*}
Under the conditioning, $R_x=1$.  Removing the conditioning changes the
joint law in total variation by at most
$\Pp_x(N_x\text{ is not prime})=o(1)$, which proves
\eqref{eq:super-boundary}.  Since $b_x\to\infty$, its tightness also
implies $N_x/x\to1$ in probability.
\end{proof}

\subsection{Consequences}

\begin{proof}[Proof of Corollary~\textup{\ref{cor:giant-prime}}]
First suppose $0<\gamma\le1$.  Theorem~\ref{thm:joint} gives
$N_x/x\Rightarrow U$ for $0<\gamma<1$ and
$N_x/x\Rightarrow B_\beta$ for $\gamma=1$.  Both limiting variables are
positive almost surely.  For every $\eta,\delta>0$, once
$x^{-\eta}<\delta$,
\[
 \Pp_x\left(\frac{\log(N_x/x)}{\log x}\le-\eta\right)
 \le \Pp_x(N_x/x\le\delta).
\]
Taking the upper limit and then letting $\delta\downarrow0$ gives
\begin{equation}\label{eq:log-N}
 \frac{\log N_x}{\log x}\xrightarrow{\Pp}1.
\end{equation}
If $0<\gamma<1$, then $a_x\log R_x$ is tight and
$a_x\log x=\beta\gamma(\log x)^\gamma\to\infty$.  If $\gamma=1$,
then $R_x\Rightarrow M_\beta$ and is tight.  Consequently, in both
cases,
\begin{equation}\label{eq:log-R-small}
 \frac{\log R_x}{\log x}\xrightarrow{\Pp}0.
\end{equation}
Also, $\Pp_x(N_x=1)=Z_x^{-1}\to0$ by
Theorem~\ref{thm:partition}.  On $\{N_x\ge2\}$,
\[
 \frac{\log P^+(N_x)}{\log N_x}
 =1-\frac{\log R_x}{\log N_x},
 \qquad
 \|\W(N_x)-(1,0,0,\ldots)\|_1
 =2\frac{\log R_x}{\log N_x}.
\]
Together with \eqref{eq:log-N} and \eqref{eq:log-R-small}, these
identities prove \eqref{eq:giant} and \eqref{eq:full-degenerate} for
$0<\gamma\le1$.

If $\gamma>1$, \eqref{eq:super-Z-prime} shows that $N_x$ is prime with
probability tending to one.  On that event $P^+(N_x)=N_x$ and
$\W(N_x)=(1,0,\ldots)$, which proves both assertions in the remaining
case.
\end{proof}

\begin{proof}[Proof of Corollary~\textup{\ref{cor:defect}}]
In every regime, $\Pp_x(N_x=1)=Z_x^{-1}\to0$ by
Theorem~\ref{thm:partition}, so it is enough to work on
$\{N_x\ge2\}$, where $\mathcal D_x=\log R_x/\log N_x$.

\smallskip
\noindent\emph{The defect limits.}
The case $\gamma=0$ follows from Theorem~\ref{thm:zero}, since the first
coordinate of $\W(N_x)$ is $1-\mathcal D_x$.  If
$0<\gamma<1$, then
\[
 (\log x)^\gamma \mathcal D_x
 =\frac{a_x\log R_x}{\beta\gamma}
 \frac{\log x}{\log N_x},
\]
and the conclusion follows from \eqref{eq:joint-limit} and
\eqref{eq:log-N}.  At $\gamma=1$, use
$(\log x)\mathcal D_x=(\log x/\log N_x)\log R_x$ and
\eqref{eq:critical-joint}.  For $\gamma>1$,
\eqref{eq:super-Z-prime} gives $\Pp_x(\mathcal D_x=0)\to1$.

\smallskip
\noindent\emph{The ordinary scale.}
For $\gamma=0$, Theorem~\ref{thm:zero} gives
$\mathcal D_x\Rightarrow1-S_1$, where $1-S_1>0$ almost surely, while
$\log N_x\to\infty$ in probability.  Since
$\mathcal D_x=\log R_x/\log N_x$, for $M>1$ and $\varepsilon>0$,
\[
 \Pp_x(R_x\le M)
 \le
 \Pp_x(\mathcal D_x\le\varepsilon)
 +\Pp_x\left(\log N_x\le\frac{\log M}{\varepsilon}\right).
\]
Letting first $x\to\infty$ and then $\varepsilon\downarrow0$ shows
that $R_x\to\infty$ in probability.  For $0<\gamma<1$ and every
fixed $M>1$,
\[
 \Pp_x(R_x\le M)
 =\Pp_x(a_x\log R_x\le a_x\log M)\longrightarrow0,
\]
because $a_x\downarrow0$ and $G$ has no atom at zero.
At $\gamma=1$, $P^+(N_x)/N_x=R_x^{-1}$ and
\eqref{eq:critical-joint} gives
$P^+(N_x)/N_x\Rightarrow M_\beta^{-1}$.  If $\gamma>1$, the same ratio
equals one whenever $N_x$ is prime, whose probability tends to one by
\eqref{eq:super-Z-prime}.  This proves the remaining lines of
\eqref{eq:ordinary-ratio-regimes}.
\end{proof}

\begin{proof}[Proof of Corollary~\textup{\ref{cor:prime-freezing}}]
For $\gamma=0$, the total Gibbs weight of the prime integers is
$e^\beta\theta\pi(x)$, where $\pi(x)=\#\{p\le x\}$.  The prime number
theorem and \eqref{eq:Z-zero}
give
\[
 \Pp_x(N_x\text{ is prime})
 \sim\frac{\Gamma(\theta+1)}{(\log x)^\theta}.
\]
For $\gamma>0$, the total Gibbs weight of the prime integers is exactly
$\theta A(x)$.  Combining Lemma~\ref{lem:weighted-pnt} with
\eqref{eq:Z-unified} gives, for $0<\gamma\le1$,
\[
 \Pp_x(N_x\text{ is prime})
 =\frac{\theta A(x)}{Z_x}
 \sim\zeta(1+a_x)^{-\theta}.
\]
This is asymptotic to $a_x^\theta$ when $0<\gamma<1$, and equals
$\zeta(1+\beta)^{-\theta}$ when $\gamma=1$.
The case $\gamma>1$ is \eqref{eq:super-Z-prime}.
\end{proof}

\section*{Funding}
This work was supported by the National Natural Science Foundation of China
(grant no.~12401171) and by the National Key R\&D Program of China
(grant no.~2022YFA1006500).

\end{document}